\documentclass[11pt]{article}

\usepackage{graphicx}
\usepackage[reqno]{amsmath}
\usepackage{amssymb}
\usepackage{amsthm}
%\usepackage[sortcites,backend=bibtex,style=numeric,sorting=anyvt,doi=false,url=false,giveninits=true,isbn=false,maxbibnames=99]{biblatex}
%\addbibresource{mybibfile.bib}
\usepackage{color}
\usepackage{dutchcal}

%to use enumerate command.
\usepackage{enumitem}
\usepackage{cleveref}

\usepackage{relsize}
\usepackage{nth}    % abbreviation for numbering, e.g. first, second,..

%numbering equations according to sections.
\numberwithin{equation}{section}

\textwidth 490pt
\oddsidemargin -10pt
\topmargin -1.0 cm
\textheight 24 cm

\newtheorem{theorem}{Theorem}[section]
\newtheorem{lemma}[theorem]{Lemma}

\newtheorem{proposition}[theorem]{Proposition}

\newtheorem{remark}[theorem]{Remark}
\newtheorem{example}[theorem]{Example}

\newcommand{\R}{\mathbb{R}}

\newcommand{\B}{\mathbb{B}}

\newcommand{\xb}{\bar{x}}

%Customize labels.
\makeatletter
\newcommand{\customlabel}[2]{%
	\protected@write \@auxout {}{\string \newlabel {#1}{{#2}{}}}}
\makeatother

\makeatletter
\newcommand{\leqnomode}{\tagsleft@true\let\veqno\@@leqno}
\newcommand{\reqnomode}{\tagsleft@false\let\veqno\@@eqno}
\makeatother

\date{}
%change the line space in document.
%\renewcommand{\baselinestretch}{2.5} 
\begin{document}

\title{Normality of Necessary Optimality Conditions for Calculus of Variations Problems with State Constraints}

	\author{ 
		N. Khalil\footnote{ {\it MODAL'X, Universit\'e Paris Ouest Nanterre La D\'efense, 200 Avenue de la R\'epublique, 92001 Paris Nanterre, France, e-mail: \/}
			{\tt nathalie.khalil@parisnanterre.fr}} ,
		S. O. Lopes \footnote{ {\it CFIS and DMA, Universidade do Minho, Guimar$\tilde{a}$es, Portugal, e-mail: \/} 
			{\tt sofialopes@math.uminho.pt}
				This author was supported by POCI-01-0145-FEDER-006933-SYSTEC, PTDC/EEI-AUT/2933/2014, POCI-01-0145-FEDER-016858 TOCCATTA and POCI-01-0145-FEDER-028247  To Chair - funded by FEDER funds through COMPETE2020 - Programa Operacional Competitividade e Internacionaliza\c{c}$\tilde{a}$o (POCI) and by national funds (PIDDAC) through FCT/MCTES which is gratefully acknowledged. Financial support from the Portuguese Foundation for Science and Technology (FCT) in the framework of the Strategic Financing UID/FIS/04650/2013 is also acknowledged. \protect \includegraphics[height=5.0mm]{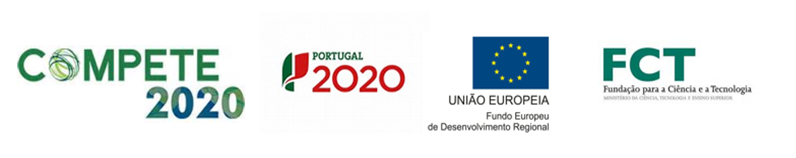}  }
	}

	\maketitle
\begin{abstract}\noindent
We consider non-autonomous calculus of variations problems with a state constraint represented by a given closed set. We prove that if the interior of the Clarke tangent cone of the state constraint set is non-empty (this is the constraint qualification that we suggest here), then the necessary optimality conditions apply in the normal form. We establish normality results for (weak) local minimizers and global minimizers, employing two different approaches and invoking slightly diverse assumptions. More precisely, for the local minimizers result, the Lagrangian is supposed to be Lipschitz with respect to the state variable, and just lower semicontinuous in its third variable. On the other hand, the approach for the global minimizers result (which is simpler) requires the Lagrangian to be convex with respect to its third variable, but the Lipschitz constant of the Lagrangian with respect to the state variable might now depend on time.

%We consider non-autonomous calculus of variations problems with a state constraint represented by a given closed set. We prove that if the interior of the Clarke tangent cone of the state constraint set is non-empty (this is the constraint qualification that we suggest here), then the necessary optimality conditions apply in the normal form. We establish normality results for $W^{1,1}-$local minimizers and global minimizers, employing two different approaches and invoking slightly diverse assumptions. More precisely, for the $W^{1,1}-$local minimizers result, the Lagrangian $L= L(t,x,v)$ is supposed to be Lipschitz w.r.t. the state variable $x$, and just lower semicontinuous w.r.t. $v$. On the other hand, the approach for the global minimizers result (which is simpler) requires the Lagrangian to be convex w.r.t. $v$, but the Lipschitz constant of $L$ w.r.t. $x$ might now depend on time.  

%We give the details of two possible techniques for the proof: the first one concerns $W^{1,1}-$local minimizers, and uses the normality results for optimal control problems once our variational problem is expressed as an optimal control problem with final cost (via a {\it state augmentation} procedure); the second technique which involves $W^{1,1}-$global minimizers, is simpler and employs a neighboring feasible trajectory result with $L^\infty-$linear estimates.
\end{abstract}

\vskip3ex
\noindent
{\bf Keywords:}{ Calculus of Variations $\cdot$ Constraint qualification $\cdot$ Normality $\cdot$ Optimal Control  $\cdot$ Neighboring Feasible Trajectories}

\section{Introduction}
We consider the following non-autonomous calculus of variations problem subject to a state constraint 
\begin{equation}\begin{cases}\label{problem: calculus of variations}
		\begin{aligned}
			& {\text{minimize}}
			&&  \int_S^{T} L (t,x(t),\dot{x}(t)) \ dt  \\
			&&& \hspace{-1.9cm} \text{over arcs } x(.) \in  W^{1,1}([S,T], \mathbb{R}^{n}) \text{ satisfying} \\
			&&& x(S) = x_0 \ , \\
			&&& x(t) \in  A \quad \quad {\rm for \, all }\; t \in [S,T] \  , \end{aligned}\end{cases} \tag{CV}\leqnomode
\end{equation}
for a given Lagrangian $L: [S,T]\times \R^n \times \R^n \to \R$, an initial datum $x_0 \in \R^n$ and a closed set $A \subset \mathbb{R}^{n}$.

Briefly stated, the problem consists in minimizing the integral of a time-dependent Lagrangian $L$ over admissible absolutely continuous arcs $x(.)$ (that is, the left end-point and the state constraint of the problem (\ref{problem: calculus of variations}) are satisfied). We say that an admissible arc $\bar x$ is a $W^{1,1}-${\it local minimizer} if there exists $\epsilon > 0$ such that
\[
\int_S^{T} L (t,\bar{x} (t),\dot{\bar x}(t))dt \leq \int_S^{T} L (t,x(t),\dot{x}(t))dt ,
\] for all admissible arcs $x$ satisfying
\[
\left\| x(.) - \bar x (.) \right\|_{W^{1,1}} \leq \epsilon .
\]
%where $\|x(.)\|_{W^{1,1}} := |x(S)| + \| \dot x(.)\|_{L^{1}} $ for all $x(.) \in W^{1,1}([S,T], \mathbb{R}^n).$

The purpose of this paper is to derive necessary optimality conditions in the normal form for problems like (\ref{problem: calculus of variations}); that is, when trajectories satisfy a set of necessary conditions with a nonzero cost multiplier. Indeed, in the pathological situation of abnormal extrema, the objective function to minimize does not intervene in the selection of the candidates to be minimizers. To overcome this difficulty, new additional hypotheses have to be imposed, known as {\it constraint qualifications}, that permit to identify some class of problems for which normality is guaranteed. There has been a growing interest in the literature to ensure the normality of necessary optimality conditions for state-constrained calculus of variations problems. For instance, \cite{ferreira_when_1994} deals with autonomous Lagrangian, studied for $W^{1,1}-$local minimizers and with a state constraint expressed in terms of an inequality of a twice continuously differentiable function. The constraint qualification referred to these smooth problems imposes that the gradient of the function representing the state constraint set is not zero at any point on the boundary of the state constraint set. The result in \cite{ferreira_when_1994} has been successively extended in \cite{fontes_normal_2013} to the nonsmooth case, for $L^\infty-$local minimizers, imposing a constraint qualification which makes use of some hybrid subgradients to cover situations in which the function, which defines the state constraint set, is not differentiable. More precisely the idea of the constraint qualification in \cite{fontes_normal_2013} is the following: the angle between any couple of (hybrid) subgradients of the function that defines the state constraint set is `acute'.
A useful technique employed in \cite{ferreira_when_1994} and \cite{fontes_normal_2013} in order to derive optimality conditions in the normal form consists in introducing an extra variable which reduces the reference calculus of variations problem to an optimal control problem with a terminal cost (this method is known as the `state augmentation').

In this paper, the state constraint set is given in the intrinsic form (i.e. it is a given closed set) and the constraint qualification we suggest is to assume that the interior of the Clarke tangent cone to the state constraint set is nonempty.  We emphasize that our constraint qualification generalizes the ones discussed in \cite{fontes_normal_2013} and \cite{ferreira_nondegenerate_1999} and can be therefore applied to a broader class of problems. This is clarified in Proposition \ref{Prop3} and Example \ref{ex0} where the constraint qualifications reported in \cite{fontes_normal_2013} and \cite{ferreira_nondegenerate_1999} imply the one that we suggest in our paper.

We propose two main theorems which establish the normality of the necessary optimality conditions for $W^{1,1}-$local minimizers and for global minimizers. For these two cases we identify two different approaches, both based on a state augmentation technique, but combined with:
\begin{enumerate}[label= \arabic*), ref= \arabic*)]
	\item\label{item: approach 1) for normality in CV} either a construction of a suitable control and a normality result for optimal control problems (this is for the $W^{1,1}-$local minimizers case);
	\item\label{item: approach 2) for normality in CV} or a `distance estimate' result coupled with a standard maximum principle in optimal control (for the global minimizers case). 
\end{enumerate}

More precisely, in the first result approach (considering the case of $W^{1,1}-$local minimizers), we invoke some stability properties of the interior of the Clarke tangent cone. This allows to select a particular control which pushes the dynamic of the control system inside the state constraint more than the reference minimizer. In such circumstances, necessary conditions in optimal control apply in the normal form: we opt here for the broader version (i.e. covering $W^{1,1}-$local minimizers) of normality for optimal control problem established in \cite{fontes_normal_2013} (which is concerned merely with $L^\infty-$local minimizers). The normality of the associated optimal control problem yields the desired normality property for our reference calculus of variations problem, considering $W^{1,1}-$local minimizers. This result is valid for Lagrangians $L=L(t,x,v)$ which are Lipschitz w.r.t. $x$ and lower semicontinuous w.r.t. $v$. The second result approach (which deals with global minimizers) requires to impose slightly different assumptions on the data: here the Lipschitz constant of the Lagrangian w.r.t. $x$ might be an integrable function depending on the time variable, but $v \mapsto L(t,x,v)$ has to be convex. It is also necessary to impose a constraint qualification which is slightly stronger than that one considered for the first result (but still in the same spirit). The proof in this case is much shorter, simpler, and it employs a neighboring feasible trajectories result satisfying $L^\infty-$linear estimates (cf. \cite{rampazzo_theorem_1999} and \cite{bettiol_l$infty$_2012}). This permits to find a global minimizer for an auxiliary optimal control problem to which we apply a standard maximum principle. The required normality form of the necessary conditions for the reference problem in calculus of variations can be therefore derived.

%The study of the regularity of the minimizers for problems expressed in the form of calculus of variations in the autonomous and the non-autonomous case has also attracted the attention of many mathematicians, like Clarke, Dal Maso, Frankowska, Mariconda, Vinter etc. We mention a brief overview of their works (merely in the autonomous case): Clarke and Vinter \cite{clarke_regularity_1985} have established that if the Lagrangian has some regularity properties (mainly Lipschitz) and verifies convexity w.r.t. the dynamic, then the minimizer enjoys a Lipschitz property. This result was extended to the non-convex Lagrangian (in the dynamics) and weaker assumptions on the regularity in a paper by Dal Maso and Frankowska \cite{dal_maso_autonomous_2003}.

%Another class of calculus of variations problems with boundary conditions was also studied in \cite{bousquet_continuity_2011}, \cite{bousquet_local_2007} and \cite{mariconda_lipschitz_2007} where some nice properties regarding the regularity of the minimizer are established. The existence of solutions for calculus of variations problems was also postulated in many papers, for instance in \cite{cellina_existence_1994} (where some growth conditions are assumed, without a convexity assumption w.r.t. the dynamics) and \cite{mariconda_existence_2002} (where some boundary conditions exist and no growth assumptions on $L$ are needed).

\vspace{0.5cm}

This paper is organized as follows. In a short preliminary section, we provide some of the basic notions and results of the nonsmooth analysis that are used throughout the paper. Section 3 provides the main results of this paper: necessary optimality conditions in a normal form for non-autonomous calculus of variations problems, for $W^{1,1}-$local and global minimizers. The last two sections are devoted to prove our main results making use of a normality result for optimal control problems (for the case of $W^{1,1}-$local minimizers) and a neighboring feasible trajectory result (for the case of global minimizers).

\vskip2ex
\noindent
{\bf Notation.} In this paper, $| . |$ refers to the Euclidean norm. $\mathbb{B}$ denotes the closed unit ball. Given a subset $X$ of $\mathbb{R}^{n}$, $\partial X$ is the boundary of $X$, ${\rm int}\ X$ is the interior of $X$ and ${\rm co}\ X$ is the convex hull of $X$. The Euclidean distance of a point $y$ from the set $X$ (that is $\inf \limits_{x \in X} | x-y | $) is written $d_X(y)$. Given a measure $\mu$ on the time interval $[S,T]$, we write $supp(\mu)$ for the support of the measure $\mu$. $W^{1,1}([S,T], \mathbb{R}^n)$ denotes the space of absolutely continuous functions defined on $[S,T]$ and taking values in $\mathbb{R}^{n}$.

\section{Some basic nonsmooth analysis tools}
We introduce here some basic nonsmooth analysis tools which will be employed in this paper. For further details, we refer the reader for instance to the books \cite{aubin_set-valued_2009}, \cite{clarke_functional_2013}, \cite{clarke_nonsmooth_2008}, \cite{clarke_optimization_1990}, \cite{mordukhovich_variational_2006} and \cite{vinter_optimal_2010}.
\vskip2ex
\noindent

Take a closed set $A \subset \mathbb{R}^{n}$ and a point $\bar x \in A$. The \textit{proximal normal cone} of $A$ at $\bar{x}$ is defined as
\begin{eqnarray*}
		N_{A}^{P}(\bar{x}) := \{ \eta \in \mathbb{R}^{n} : \text{there exists } M > 0 \text{ such that } \eta \cdot (x-\bar{x}) \leq M \left\| x - \bar{x} \right\| ^{2} \text{ for all } x \in A \} \ .
\end{eqnarray*}

The \textit{limiting normal cone} of $A$ at $\bar{x} \in A$ is defined to be
\begin{eqnarray*}
	N_A(\bar{x}) := \lbrace \eta \in \mathbb{R}^n : \text{there exists sequences } x_i \xrightarrow {A} \bar{x} \text{ and } \eta _i \rightarrow \eta \text{ such that } \eta_i \in N_{A}^{P}(x_{i}) \text{ for all } i \rbrace.
\end{eqnarray*}
Here $x_{i} \xrightarrow []{A} \bar{x}$ means that $x_{i} \rightarrow \bar{x}$ and $x_{i} \in A, \text{ for all } i.$

Given a lower semicontinuous function $f: \mathbb{R}^n \longrightarrow \mathbb{R} \cup \lbrace \infty \rbrace$, the \textit{limiting subdifferential} of $f$ at a point $\bar{x} \in \mathbb{R}^{n}$ such that $f(\bar{x}) < +\infty$ can be expressed in terms of the limiting normal cone to the epigraph of $f$ as follows:
\[  \partial f(\bar x) := \{ \eta \ : \ (\eta,-1) \in N_{\text{epi }f} (\bar{x}, f(\bar{x}))    \} \ .   \]

%\begin{eqnarray*}
%	&&\partial ^P f(\bar{x}) := \lbrace \eta \in \mathbb{R}^n : \exists \; \epsilon >0 \ , \ M \geq 0 \, \text{ such that }\  \\ &&\hspace{0.5 in} \eta \cdot (x-\bar{x}) \leq f(x)-f(\bar{x}) + M {| x-\bar{x} |}^2,  \text{ for all } x \in \bar{x} + \epsilon \mathbb{B}\rbrace.
%\end{eqnarray*}
%\noindent
%The \textit{limiting subdifferential} of $f$ at $\bar{x} \in \mathbb{R}^{n}$ such that $f(\bar{x}) < +\infty$ is the set
%\begin{eqnarray*}
%	&&\partial f (\bar{x}) := \lbrace \eta \in \mathbb{R}^n : \text{there exists } x_i \xrightarrow[]{f} \bar x \text{ and } \eta_i \to \eta \text{ such that }
%	\eta_i \in \partial^P f(x_i) \text{ for all } i \rbrace,
%\end{eqnarray*}
%where $x_i \xrightarrow[]{f} \bar x$ denotes $x_i  \rightarrow \bar x$ and $f(x_i) \rightarrow f(\bar x)$ for all $i$.

We also make use of the \textit{hybrid subdifferential} of a Lipschitz continuous function $h: \R^n \to \R$ on a neighborhood of $\bar x$, defined as:
\[
\partial^{>}h(\bar x) := {\rm co }\ \lbrace \gamma \in \mathbb{R}^{n} : \text{there exists } \; x_{i} \xrightarrow[]{h} \bar x \text{ such that } h(x_{i}) > 0, \; \text{for all } i \text{ and } \nabla h(x_{i}) \rightarrow \gamma \rbrace,
\]
where $x_i \xrightarrow[]{h} \bar x$ denotes $x_i  \rightarrow \bar x$ and $h(x_i) \rightarrow h(\bar x)$ for all $i$.
\noindent
Moreover, we make reference to the Clarke tangent cone to the closed set $A$ at $\bar x \in A$, defined as follow:
\begin{align*}
T_A(\bar{x}):= \lbrace \xi \in \mathbb{R}^n : \; \text{ for all} \; x_i \xrightarrow[]{A} \bar{x} & \text{ and } \  t_i \downarrow 0,  \text{ there exist }  \lbrace a_i \rbrace  \text{ in } A \text{ such that } \;  t_i^ {-1} (a_i- x_i) \rightarrow \xi \rbrace.
\end{align*}

We present also the following result:
		\begin{proposition} \label{Prop2}
			\begin{itemize}
				\item[(i)] $\partial^{>} d_A(a) \subset {\rm co}\left(N_A(a) \cap \partial \mathbb{B}\right)$ for a closed set $A \subset \mathbb{R}^{n}$ and $a \in \partial A$.
				\item[(ii)] If in addition co $N_A(a)$ is pointed, then $0 \not \in  {\rm co} \left( N_A(a) \cap \partial \mathbb{B}\right)$.
			\end{itemize} \noindent
			(We recall that $ { \rm co} \ N_{A}(a)$ is `pointed' if for any nonzero elements $d_{1}, d_{2} \in  {\rm co}\ N_{A}(a),$
			$
			d_{1} + d_{2} \neq 0.
			$)
		\end{proposition}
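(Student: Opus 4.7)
For part (i), the natural strategy is to unwind the definition of the hybrid subdifferential and exploit the standard differentiability properties of $d_A$ outside $A$. Since $\partial^{>}d_A(a)$ is the convex hull of limits $\gamma = \lim_i \nabla d_A(x_i)$ with $x_i \to a$, $d_A(x_i)>0$ and $d_A(x_i) \to d_A(a)=0$ (and $d_A$ differentiable at each $x_i$), it suffices to show each such extreme generator $\gamma$ lies in $N_A(a)\cap\partial\mathbb{B}$; the conclusion then follows by taking convex hulls. The function $d_A$ is $1$-Lipschitz, and at any differentiability point $x\notin A$ the gradient is a unit vector, so $|\nabla d_A(x_i)|=1$ for every $i$ and thus $\gamma\in\partial\mathbb{B}$. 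At such a point $x_i$ the projection onto $A$ is a singleton $\{a_i\}$, and the classical formula gives $\nabla d_A(x_i) = (x_i-a_i)/|x_i-a_i|$, a unit proximal normal to $A$ at $a_i$ (the vector $x_i-a_i$ trivially satisfies the quadratic proximal inequality since $a_i$ is the nearest point). Because $d_A(x_i)=|x_i-a_i|\to 0$, we have $a_i\to a$ in $A$, and the limiting-normal-cone definition yields $\gamma\in N_A(a)$.

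For part (ii), I would argue by contradiction. Suppose $0\in {\rm co}(N_A(a)\cap \partial\mathbb{B})$; write $0=\sum_{i=1}^{k}\lambda_i\eta_i$ with $\lambda_i>0$, $\sum_i\lambda_i=1$, and $\eta_i\in N_A(a)\cap\partial\mathbb{B}$. Since a single unit vector cannot equal zero, one must have $k\geq 2$. Set
\[
d_1 := \lambda_1\eta_1, \qquad d_2 := \sum_{i=2}^{k}\lambda_i\eta_i = -d_1.
\]
Then $d_1\neq 0$ (as $\lambda_1>0$ and $|\eta_1|=1$) and $d_2=-d_1\neq 0$. Since $N_A(a)$ is a cone, ${\rm co}\, N_A(a)$ is a convex cone stable under nonnegative scaling, so both $d_1$ and $d_2$ belong to ${\rm co}\, N_A(a)$. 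But $d_1+d_2=0$, contradicting the pointedness hypothesis.

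The only genuinely delicate point is the identification $\nabla d_A(x_i)=(x_i-a_i)/|x_i-a_i|\in N_A^{P}(a_i)$, which rests on the standard fact that at any differentiability point of $d_A$ outside $A$ the metric projection is single-valued; the rest of the argument is bookkeeping combined with the definition of $N_A(a)$ as a sequential limit of proximal normals. Part (ii) is then a short combinatorial argument using that $N_A(a)$ is a cone.
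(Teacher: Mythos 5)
Your proof is correct and follows essentially the same route the paper sketches: part (i) by unwinding the definition of $\partial^{>}d_A$ together with the standard identification $\nabla d_A(x_i)=(x_i-a_i)/|x_i-a_i|\in N_A^P(a_i)\cap\partial\mathbb{B}$ and passing to the limiting normal cone, and part (ii) by contradiction using that ${\rm co}\,N_A(a)$ is a convex cone closed under addition. The paper only states this as a one-line sketch (citing the definition, Carath\'eodory, and a contradiction argument), so your write-up is a faithful and complete elaboration of its intended argument.
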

\begin{proof}
	The proof of (i) follows directly from the definition of the hybrid subdifferentialof the distance function $d_A(.)$, and from the Caratheodory Theorem; while (ii) is proved by contradiction. 
\end{proof}
\section{Main Results}
This section provides the main results of this paper: necessary optimality conditions in a normal form for non-autonomous calculus of variations problems in the form of (\ref{problem: calculus of variations}). More precisely, we are interested in providing normality conditions for problems in which the state constraint is given in an implicit form: $A$ is just a closed set. Using the distance function, the state constraint $x(t) \in A$ can be equivalently written as a pathwise functional inequality
\[ d_{A}(x(t)) \leq 0 \quad \quad {\rm for \; all }\ t \in [S,T] \ . \]

\noindent
We provide first a normality result for $W^{1,1}-$local minimizers. We assume therefore that for a given reference arc $\bar x \in W^{1,1}([S,T], \R^n)$:
\begin{enumerate}[label=(CV\arabic*), ref=CV\arabic*]
	\item \label{item: CV1} $L(t,x,v)$ is measurable in $(t,v)$, bounded on bounded sets; and there exist $\epsilon'>0$, $K_L>0$ such that for all $t \in [S,T]$ and $x, x' \in \bar{x} (t)+ \epsilon'\B$
	$$
	|L(t,x,v)-L(t,x',v)| \leq K_L |x-x'| \quad \text{uniformly on } v \in \R^n.$$
	\item \label{item: CV2} $v \mapsto L(t,\bar{x}(t),v)$ is lower semicontinuous for all $t \in [S,T]$.
	
%	\item \label{item: CV3} (Coercivity) There exists an increasing function $\theta : [ 0 , \infty) \to [0 , \infty)$ such that $\lim\limits_{\alpha \to \infty} \frac{\theta(\alpha)}{\alpha} = + \infty$, and a constant $\beta$ such that
%	\[  L(t,x,v) > \theta(|v|) - \beta |v|  \quad \text{for all } t \in [S,T], \ x\in \R^n, \ v \in \R^n  . \]
\end{enumerate}

\begin{theorem}[$W^{1,1}-$Local Minimizers]\label{Thm1}
	Let $\bar{x}$ be a $W^{1,1}-$local minimizer for (\ref{problem: calculus of variations}), and assume that hypotheses (\ref{item: CV1})-(\ref{item: CV2}) are satisfied. Suppose also that $\bar x$ is Lipschitz continuous and
	\begin{enumerate}[label= $(CQ)$, ref=$CQ$]
		\item \label{CQ for calculus of variations} \[\text{int } T_A(z) \neq \emptyset \ , \quad \text{for all } z \in \bar{x}([S,T])\cap \partial A.\]
	\end{enumerate}
	Then, there exist $p(.) \in W^{1,1}([S,T], \mathbb{R}^n)$, a function of bounded variation $\nu(\cdot) : [S,T] \rightarrow \mathbb{R}^n$, continuous from the right on $(S,T)$, such that: for some positive Borel measure $\mu$ on $[S,T]$, whose support satisfies
	\[ \text{supp}(\mu) \ \subset \ \{  t \in [S,T] \ : \ \bar x(t) \in \partial A   \},  \]
	and some Borel measurable selection
	\[   \gamma(t) \in \partial_x^> d_A(\bar x(t)) \quad \mu-\text{a.e. }\ t \in [S,T]  \]
	we have
	\begin{enumerate}[label=(\alph*), ref=\alph*]
		\item \label{0_chap5} $\nu(t) = \int_{[S,t]} \gamma(s) d\mu(s)$ \quad for all $t\in (S,T]$ ;
		\item \label{1_chap5} $\dot{p}(t) \in \textrm{co } \partial_{x} L (t,\bar{x}(t), \dot{\bar{x}}(t)) \quad \textrm{ and } \quad q(t) \in \textrm{co } \partial_{\dot{x}} L (t,\bar{x}(t), \dot{\bar{x}}(t))$ \quad a.e. $t\in [S,T]$ ;
		\item \label{2_chap5} $q(T)=0$ ;
	\end{enumerate}	
	where
	\begin{equation*} q(t)  = \begin{cases}
			p(S)   \quad & t =S \\
			p(t) +\int_{[S,t]} \gamma(s) d\mu(s)   \quad &t \in (S,T] \ .
		\end{cases}
	\end{equation*}
\end{theorem}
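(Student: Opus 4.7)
The plan is to recast the calculus of variations problem (\ref{problem: calculus of variations}) as a state-constrained optimal control problem via the standard state-augmentation trick, and then to appeal to a $W^{1,1}$-local minimizer normality theorem for optimal control from \cite{fontes_normal_2013}. Concretely, I would introduce a scalar auxiliary state $z$ with dynamics $\dot z(t)=L(t,x(t),v(t))$, $z(S)=0$, and use $v\in\mathbb{R}^n$ as the control driving $\dot x(t)=v(t)$. One then minimizes the terminal cost $z(T)$ over trajectories of the augmented system $(x,z)$ subject to $x(t)\in A$. Hypothesis (\ref{item: CV1}) and the Lipschitzness of $\bar x$ ensure that $L(\cdot,\bar x(\cdot),\dot{\bar x}(\cdot))$ is integrable, so that the augmented arc $(\bar x,\bar z)$ is well-defined, admissible, and inherits the $W^{1,1}$-local minimality property from $\bar x$.

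\textbf{Construction of an inward pointing control.} The technical heart of the proof is verifying the constraint qualification required by the optimal control normality theorem: at each $t$ with $\bar x(t)\in\partial A$, one has to exhibit a velocity $\hat v(t)$ whose pairing with every nonzero limiting normal of $A$ at $\bar x(t)$ is strictly negative. The hypothesis $\text{int }T_A(\bar x(t))\neq\emptyset$ is tailor-made for this: pick any $\xi_t\in\text{int }T_A(\bar x(t))$, and by bipolarity $\xi_t\cdot\zeta<0$ for every nonzero $\zeta\in N_A(\bar x(t))$. Passing to convex hulls and invoking Proposition \ref{Prop2}(i), the same strict inequality holds against every element of $\partial^>_x d_A(\bar x(t))$. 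A measurable-selection argument, combined with the known stability (upper semicontinuity) of the set-valued map $z \mapsto \text{int }T_A(z)$ along the compact curve $\bar x([S,T])\cap\partial A$, then delivers a measurable (in fact, essentially piecewise constant) inward-pointing control $\hat v$ producing a uniform strict bound on $\gamma\cdot(\hat v(t)-\dot{\bar x}(t))$ for $\gamma$ in the hybrid subdifferential.

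\textbf{Applying OC normality and translating back.} With this inward-pointing control at our disposal, the hypotheses of the $W^{1,1}$-version of the optimal control normality theorem of \cite{fontes_normal_2013} are met, yielding a nontrivial set of multipliers with cost multiplier equal to one. In the augmented problem, the costate $\pi$ associated with $z$ satisfies $\dot\pi\equiv 0$ (because $L$ is independent of $z$), and the transversality condition arising from the terminal cost $z(T)$ forces $\pi\equiv -1$ throughout $[S,T]$. The adjoint equation for $p$ then reads $\dot p(t)\in\text{co}\,\partial_x L(t,\bar x(t),\dot{\bar x}(t))$, the state-constraint measure $\mu$ comes out supported in $\{t:\bar x(t)\in\partial A\}$ with a Borel density $\gamma(t)\in\partial^>_x d_A(\bar x(t))$ giving formula (\ref{0_chap5}), and the Hamiltonian maximization (with respect to $v$) transcribes, via the usual conversion between the maximum principle and the Euler--Lagrange inclusion, into the inclusion $q(t)\in\text{co}\,\partial_{\dot x}L(t,\bar x(t),\dot{\bar x}(t))$, where $q$ is the left-corrected adjoint $p+\nu$. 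The natural transversality $q(T)=0$ for the free-endpoint OC problem completes conditions (\ref{0_chap5})--(\ref{2_chap5}).

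\textbf{Main obstacle.} The delicate step is the inward-pointing construction: one must combine the pointwise choice $\xi_t\in\text{int }T_A(\bar x(t))$ with a stability property of $\text{int }T_A$ (to obtain an open-cover / finite-partition decomposition along $\bar x$) and with Proposition \ref{Prop2} (to transfer the normal-cone inequality to the hybrid subdifferential of $d_A$), all while preserving measurability and providing a uniform strict gap suitable as input to the normality theorem. Once this is secured, the remainder of the argument is essentially the algebraic translation of OC multipliers into CV multipliers.
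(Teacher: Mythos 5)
Your proposal is correct and follows essentially the same route as the paper: state augmentation, construction of an inward-pointing control from $\text{int}\, T_A \neq \emptyset$ via a covering/stability argument (the paper's Lemmas \ref{Lemma2} and \ref{Lemma3}), verification of the constraint qualifications of a normality theorem for state-constrained optimal control, and translation of the resulting multipliers back into conditions (\ref{0_chap5})--(\ref{2_chap5}). The only point you gloss over is that the normality result of \cite{fontes_normal_2013} is stated for $L^\infty$-local minimizers, so the paper first upgrades it to the $W^{1,1}$-local case (Theorem \ref{Thm3}) by adjoining the extra state $\dot y(t)=|f(t,x(t),u(t))-\dot{\bar x}(t)|$ with the inactive terminal constraint $y(T)\in\alpha\mathbb{B}$.
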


The following theorem concerns global minimizers, establishing the same necessary optimality conditions (in the normal form) as Theorem \ref{Thm1}. Here we invoke slightly different assumptions: we impose the convexity of the Lagrangian w.r.t. $v$, and the stronger constraint qualification (\ref{CQ for calculus of variations_second_technique}) below (valid for any point belonging to the boundary of $A$); however, we allow that the Lipschitz constant of $L$ w.r.t. $x$ might depend on $t$.
More precisely, we assume that for some $R_0 >0$,

\begin{enumerate}[label=(CV\arabic*)$'$, ref=(CV\arabic*)$'$]
	\item \label{item: CV1'} $L(.,x,v)$ is measurable for all $x\in \R^n$ and $v \in \R^n$, and there exists $\eta > 0$ such that the set-valued map $t \leadsto \{ L(t,x,v) : v \in R_0 \B \}$ is absolutely continuous from the left uniformly over $x \in (\partial A + \eta \B) \cap R_0 \B$. Moreover, $L(t,x,u)$ is bounded on bounded sets and, there exists an integrable function $K_L(.): [S,T] \to \R_+$ such that for all $t \in [S,T]$ and $x, x' \in R_0 \mathbb{B}$
	$$
	|L(t,x,v)-L(t,x',v)| \leq K_L(t) |x-x'| \quad \text{uniformly in } v\in \R^n. $$
	\item \label{item: CV2'} $v \mapsto L(t,x,v)$ is convex, for all $(t,x) \in [S,T] \times \mathbb{R}^n$.
\end{enumerate}

We recall that, given a set $X_0 \subset \R^n$ and a multifunction $F(.,.): [S,T] \times \R^n \leadsto \R^n$, we say that $F(.,x)$ is absolutely continuous from the left, uniformly over $x\in X_0$ if and only if the following condition is satisfied: given any $\epsilon>0$, we may find $\delta>0$ such that, for any finite partition of $[S,T]$
\[ S \le s_1 < t_1 \le s_2 < t_2 \le \ldots \le s_m < t_m \le T   \]  
satisfying $\sum_{i=1}^{m} (t_i-s_i) < \delta$, we have
\[ \sum_{i=1}^{m} d_{F(t_i,x)} (F(s_i,x)) < \epsilon.  \]

%\begin{remark} \label{rmk0} Hypotheses \ref{item: CV1'} and \ref{item: CV2'} imply that $v \mapsto L(t,x,v)$ is locally Lipschitz (cf. \cite[Proposition 2.2.6]{clarke_optimization_1990}).\end{remark}

\begin{theorem}[Global Minimizers]\label{Thm2}
	Let $\bar{x}$ be a global minimizer for (\ref{problem: calculus of variations}), and assume that hypotheses \ref{item: CV1'}-\ref{item: CV2'} are satisfied. Suppose also that $\bar x$ is Lipschitz continuous and
	\begin{enumerate}[label=($\widetilde{CQ}$), ref= $\widetilde{CQ}$]
		\item \label{CQ for calculus of variations_second_technique} \[\text{int } T_A(z) \neq \emptyset \ , \quad \text{for all } z \in \partial A.\]
	\end{enumerate}
	Then, there exist $p(.) \in W^{1,1}([S,T], \mathbb{R}^n)$, a function of bounded variation $\nu(\cdot) : [S,T] \rightarrow \mathbb{R}^n$, continuous from the right on $(S,T)$, such that 
%	for some positive Borel measure $\mu$ on $[S,T]$, whose support satisfies
%	\[ \text{supp}(\mu) \ \subset \ \{  t \in [S,T] \ : \ \bar x(t) \in \partial A   \},  \]
%	and some Borel measurable selection
%	\[   \gamma(t) \in \partial_x^> d_A(\bar x(t)) \quad \mu-\text{a.e. }\ t \in [S,T]  \]
 conditions (\ref{0_chap5})-(\ref{2_chap5}) of Theorem \ref{Thm1} remain valid.
%	\begin{enumerate}[label=(\alph*), ref=\alph*]
%		\item \label{0_second} $\nu(t) = \int_{[S,t)} \gamma(s) d\mu(s)$ \quad for all $t\in (S,T]$ ;
%		\item \label{1_second} $\dot{p}(t) \in \textrm{co } \partial_{x} L (\bar{x}(t), \dot{\bar{x}}(t)) \quad \textrm{ and } \quad q(t) \in \textrm{co } \partial_{\dot{x}} L (\bar{x}(t), \dot{\bar{x}}(t))$ \quad a.e. $t\in [S,T]$ ;
%		\item \label{2_second} $q(T)=0$ ;
%		%\item \label{eq:necessary conditions of optimality10} $\gamma(t) \in  \partial^{>}d_A( \bar{x}(t))$ and $\textrm{supp } \{\mu\} \subset \lbrace t\in [S,T]\; : \; \bar{x}(t) \in \partial A \rbrace\ ;$
%	\end{enumerate}	
%	where
%	\begin{equation*} q(t)  = \begin{cases}
%			p(S)   \quad & t =S \\
%			p(t) +\int_{[S,t]} \gamma(s) d\mu(s)   \quad &t \in (S,T] \ .
%		\end{cases}
%	\end{equation*}
\end{theorem}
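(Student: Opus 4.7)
The plan is to convert (\ref{problem: calculus of variations}) into an auxiliary unconstrained optimal control problem via the standard state-augmentation trick, and then apply the standard (normal) maximum principle thanks to a neighboring feasible trajectories (NFT) argument with $L^{\infty}$-linear estimates. First I would introduce the augmented state $y(\cdot)$ with $\dot{y}(t)=L(t,x(t),v(t))$ and $y(S)=0$, and write the problem as: minimize $y(T)$ over trajectories of the control system $\dot{x}=v$, $\dot{y}=L(t,x,v)$, $x(S)=x_0$, $x(t)\in A$. Convexity of $v\mapsto L(t,x,v)$ from \ref{item: CV2'} ensures the augmented velocity set is convex (so that a standard maximum principle applies without relaxation), while \ref{item: CV1'} gives the time-regularity we will need in the NFT step.

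Next, using $(\widetilde{CQ})$ together with \ref{item: CV1'}, I would invoke the NFT theorem of \cite{rampazzo_theorem_1999}, \cite{bettiol_l$infty$_2012}: the inward-pointing property $\textrm{int}\,T_A(z)\neq\emptyset$ at every boundary point of $A$ is exactly what is needed to obtain, for a constant $K$ independent of the trajectory, a feasible trajectory $\tilde{x}$ of the augmented system close in $L^{\infty}$ to any approximately feasible one, with
\[
\|\tilde{x}-x\|_{L^{\infty}}\le K\,\max_{t\in[S,T]} d_A(x(t)).
\]
Combined with the Lipschitz dependence of $L$ on $x$ with integrable constant $K_L(\cdot)$, this yields a modulus-of-infeasibility estimate: any competitor $x$ can be replaced by a feasible $\tilde{x}$ while paying at most $(\int_S^T K_L)\cdot K\cdot \max_t d_A(x(t))$ in the cost. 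Hence $\bar x$ remains a global minimizer of the \emph{penalized}, state-constraint-free problem obtained by adding $M\,\max_{t}d_A(x(t))$ to the cost, for any $M\ge K\int_S^T K_L(t)\,dt$.

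I would then apply the standard maximum principle to this penalized, unconstrained optimal control problem, to which $\bar x$ (augmented by $\bar y(t)=\int_S^t L(s,\bar x(s),\dot{\bar x}(s))\,ds$) is a global minimizer. The normal form is automatic because there is no state constraint and the cost multiplier associated to the minimized quantity $y(T)$ can be normalized to $1$. The subdifferential of the endpoint penalty $M\,\max_{t}d_A(x(t))$ produces a nonnegative Borel measure $\mu$ supported on $\{t:\bar{x}(t)\in\partial A\}$ and a Borel selection in the appropriate subdifferential of $d_A$ along $\bar{x}$; using Proposition \ref{Prop2}(i) this selection may be taken in $\partial^{>}_x d_A(\bar{x}(t))$, i.e.\ a $\gamma(\cdot)$ as in the statement. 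Reading the augmented costate $(p,-1)$ and performing the standard change of variables yields $p(\cdot)$, $\nu(\cdot)=\int_{[S,\cdot]}\gamma(s)\,d\mu(s)$ and $q(\cdot)$; the adjoint equation and transversality condition $q(T)=0$ then transcribe directly to conditions (\ref{0_chap5})--(\ref{2_chap5}).

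The main obstacle I expect is the NFT step: verifying that an $L^\infty$-linear estimate of the above form is indeed available for the augmented dynamics (\emph{not} just for the $x$-component) under the precise hypotheses \ref{item: CV1'}--\ref{item: CV2'} and $(\widetilde{CQ})$, while keeping trajectories inside the bounded region $R_0\mathbb{B}$ where the Lipschitz bound $K_L(\cdot)$ is valid and preserving the left-endpoint $x(S)=x_0$. A secondary technical point is the selection measurability of $\gamma(\cdot)$ and the justification that the endpoint-penalty subgradient, a priori living in the limiting normal cone, may be represented via the hybrid subdifferential; this step relies on Proposition \ref{Prop2}(i) together with the fact that $\mu$ charges only boundary points of $A$.
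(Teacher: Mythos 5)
Your overall architecture coincides with the paper's: state augmentation, a neighboring-feasible-trajectory (NFT) theorem with $L^\infty$-linear estimates under (\ref{CQ for calculus of variations_second_technique}), an exact-penalization lemma, and a maximum principle whose multipliers are then shown to be normal. Two of your steps, however, contain genuine gaps.

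First, the penalization constant. You claim a competitor $x$ can be replaced by a feasible $\tilde x$ at a cost increase of at most $\bigl(\int_S^T K_L\bigr)\,K\,\max_t d_A(x(t))$, ``combining'' the $L^\infty$ estimate with the Lipschitz dependence of $L$ on $x$. This does not work: the bound $|L(t,x,v)-L(t,x',v)|\le K_L(t)|x-x'|$ controls the integrand only for a \emph{common} velocity $v$, whereas the NFT trajectory has $\dot{\tilde x}\neq\dot x$ in general and \ref{item: CV1'} provides no modulus of continuity of $L$ in $v$. The paper avoids this by applying the NFT theorem to the \emph{augmented} dynamics $F(t,x)=\{(u,L(t,x,u)):u\in M\B\}$ with state constraint $A\times\R$ (the truncation $M\ge 2\|\dot{\bar x}\|_{L^\infty}$ is what secures the boundedness hypotheses of the NFT theorem and is compatible with (\ref{CQ for calculus of variations_second_technique})), so that the $L^\infty$ estimate bounds the cost difference $|z(T)-\hat z(T)|$ directly; the penalty weight is then the NFT constant $K$ itself and $K_L$ plays no role in this step. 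You flag this as your main obstacle, but the resolution changes the argument, not just a constant. Second, normality is not ``automatic because there is no state constraint.'' The penalized cost contains the running-max term $K\max_t d_{A\times\R}$, which is not an endpoint cost; the paper handles it by adjoining a further variable $w$ with $w(S)\in\R^+$, $\dot w=0$, endpoint cost $z(T)+Kw(T)$, and the \emph{reinstated} state constraint $d_A(x)-w\le 0$, and then applies the state-constrained maximum principle with multiplier $\lambda\ge 0$ and measure $\mu$. Normality then follows from a short but genuine argument: the transversality conditions give $p_3\le 0$ and $-\bigl(p_3-\int_{[S,T]}d\mu\bigr)=\lambda K$, so $\lambda=0$ forces $p_1=p_2=p_3=0$ and $\mu=0$, contradicting nontriviality. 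A minor further point: the set $\{(u,L(t,x,u)):u\in M\B\}$ is a graph and is not convex even when $L(t,x,\cdot)$ is convex; convexity of $L$ in $v$ is used at the end, to extend the Weierstrass condition from $M\B$ to all of $\R^n$ so that $q(t)\in\mathrm{co}\,\partial_{\dot x}L(t,\bar x(t),\dot{\bar x}(t))$ can be derived.
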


\begin{remark}
	We observe that the validity of Theorem \ref{Thm1} and Theorem \ref{Thm2} requires that the minimizer (local or global) $\bar x(.)$ is Lipschitz. This assumption is not restrictive, indeed, not only covers previous results in this framework (as \cite{fontes_normal_2013} and \cite{ferreira_when_1994}), but earlier work shows that Lipschitz regularity of minimizers can be obtained (for both cases of autonomous and non-autonomous Lagrangian) under unrestrictive assumptions on the data. This is not a peculiar property of some problems in the absence of state constraints (see for instance \cite{dal_maso_autonomous_2003}, \cite[Theorem 4.5.2 and Theorem 4.5.4]{clarke_necessary_2005}, \cite[Corollary 3.2]{clarke_regularity_1985}). It is well-known also in the framework of state-constrained calculus of variations problems, see \cite[Corollary 16.19]{clarke_functional_2013}, \cite[Theorem 11.5.1]{vinter_optimal_2010} (for the autonomous case), and more recently in \cite[Theorem 5.2]{bettiol_nonautonomous_2017} (for the nonautonomous case). 
%	In the case of free state-constrained calculus of variations problems, result on the regularity of the minimizers for the non-autonomous Lagrangian can be found in \cite{clarke_regularity_1985}, \cite{clarke_necessary_2005}
\end{remark}

 We recall that the problem of normality of necessary conditions in calculus of variations has been previously investigated in \cite{ferreira_when_1994}, for the case of autonomous Lagrangian and smooth state constraint expressed in terms of a scalar inequality function $\{x \ : \ h(x) \leq 0 \}$ (that is, the function $h$ is of class $C^{2}$). This result has been successively extended to the case where the scalar function $h$ is nonsmooth in \cite{fontes_normal_2013}, always in the framework of autonomous Lagrangian. In both papers \cite{fontes_normal_2013} and \cite{ferreira_when_1994}, the Lagrangian is supposed to be Lipschitz w.r.t. $x$ and convex w.r.t. $v$. In our paper we deal with a non-autonomous Lagrangian and with a constraint qualification of different nature when we consider a state constraint condition in terms of a merely closed set $A$.

%More precisely, the state constraint $A$ is expressed in the form of a functional inequality (namely, $A:= \{ x \ : \ h(x) \leq 0 \}$ where $h$ is a Lipschitz continuous function); and the following constraint qualification is considered for a reference $L^{\infty}-$local minimizer $\bar{x}$ for (\ref{problem: calculus of variations}):
%\begin{enumerate}[label=(CQ13'), ref=CQ13']
%	\item \label{CQ13} There exist positive constants $c, \; \varepsilon$ such that for all $\tau \in \lbrace \sigma \in [S,T] : h(\bar{x}(\sigma)) =0 \rbrace$ and for all $x_{1}, \; x_{2} \in \lbrace \bar{x}(\sigma) : \sigma \in (\tau - \varepsilon , \tau ] \cap [S,T] \rbrace$
%	\[
%	\gamma_{1} \cdot \gamma_{2} > c \ , \quad \quad \text{for all } \ \gamma_{1} \in \partial^{>} h (x_{1}) \ , \quad \gamma_{2} \in \partial^{>} h (x_{2}).
%	\]
%	
%	Furthermore, if $h(\bar{x}(S))=0$, then for all $x_{1}, \; x_{2} \in  \bar{x}(S)  + \varepsilon \mathbb{B}$
%	\[
%	\gamma_{1} \cdot \gamma_{2} > c \ , \quad \quad \text{for all } \ \gamma_{1} \in \partial^{>} h (x_{1}) \ , \quad \gamma_{2} \in \partial^{>} h (x_{2}).
%	\]
%\end{enumerate}
%
%In \cite{fontes_normal_2013}, it was proved that if we assume that hypotheses (\ref{item: CV1})-(\ref{item: CV3}) and (\ref{CQ13}) are satisfied, then the necessary conditions apply in the normal form.

%The following proposition proves that our constraints qualifications (\ref{CQ for calculus of variations}) and (\ref{CQ for calculus of variations_second_technique}) are weaker than (\ref{CQ13}) (we consider $h(.)=d_A(.)$).

In the following proposition and example, it is shown that the constraint qualification invoked in \cite{fontes_normal_2013} (when we consider the state constraint to be expressed in terms of a distance function, that is $h(.) : = d_A(.)$) implies our constraints qualifications (\ref{CQ for calculus of variations}) and (\ref{CQ for calculus of variations_second_technique}), but the reverse implication is not true. Accordingly, (\ref{CQ for calculus of variations}) and (\ref{CQ for calculus of variations_second_technique}) can be applied to a larger class of problems.

%are weaker than the constraint qualification invoked in \cite{fontes_normal_2013} (when we consider the state constraint to be expressed in terms of a distance function $d_A(.)$).

\begin{proposition} \label{Prop3} Consider a closed set $A \subset \mathbb{R}^{n}$ and assume that the distance function to $A$, $d_{A}(.)$, satisfies the following condition: if for any given $y \in \partial A$ there exists $c >0$ such that
	\begin{equation} \label{simpler}
	\gamma_{1} \cdot \gamma_{2} > c \ , \quad \quad \text{for all } \ \gamma_{1} \ , \gamma_{2} \in \partial^{>}d_{A}(y),
	\end{equation}
	then, \;	$ {\textrm int }\ T_{A}(y) \neq \emptyset.$
	
	\noindent(Condition (\ref{simpler}) is a slightly weaker version of the constraint qualification considered in \cite{fontes_normal_2013}.)
\end{proposition}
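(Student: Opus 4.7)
The plan is to exhibit a specific vector $v$ and verify directly that $v$ lies in $\mathrm{int}\,T_A(y)$. Fix $y \in \partial A$ and let $c>0$ be as in~(\ref{simpler}). First, $\partial^> d_A(y)$ is nonempty: since $y \in \partial A$, there exist points $x_i \to y$ with $d_A(x_i) > 0$ at which (after passing to a subsequence) $d_A$ is differentiable with unit gradients, supplying an element of $\partial^> d_A(y)$. Applying~(\ref{simpler}) with $\gamma_1 = \gamma_2 = \gamma_0$ yields $|\gamma_0|^2 > c$ for any $\gamma_0 \in \partial^> d_A(y)$, so $\gamma_0 \neq 0$; I then set $v := -\gamma_0$.

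The main technical ingredient is the converse of Proposition \ref{Prop2}(i), namely the inclusion
\[ N_A(y) \cap \partial\mathbb{B} \;\subset\; \partial^> d_A(y). \]
To establish it I take a unit $\eta \in N_A(y)$ and, using the definition of the limiting normal cone, approximate $\eta$ by proximal normals $\eta_i \in N_A^P(y_i)$ with $y_i \to y$ in $A$ and $\eta_i \to \eta$. By the standard characterization of proximal normals, for each $i$ one may choose the proximal constant $M_i$ large enough so that $x_i := y_i + \eta_i/(2 M_i)$ lies outside $A$ with unique nearest point $y_i$; then $d_A$ is differentiable at $x_i$ with $\nabla d_A(x_i) = \eta_i/|\eta_i|$. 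Sending $M_i \to \infty$ fast enough forces $x_i \to y$, while the unit gradients converge to $\eta$, placing $\eta$ in $\partial^> d_A(y)$ by definition of the hybrid subdifferential.

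Combining this inclusion with the hypothesis, for every unit $\eta \in N_A(y)$ one has $\gamma_0 \cdot \eta > c$, and hence $v \cdot \eta < -c$. Using the polar representation $T_A(y) = (\mathrm{co}\,N_A(y))^{*}$, I pick any $r \in (0,c)$ and verify that for every $u \in \mathbb{B}$ and every unit $\eta \in N_A(y)$,
\[ (v + r u) \cdot \eta \;\le\; v \cdot \eta + r \;<\; -c + r \;<\; 0 , \]
the bound extending by positive homogeneity and convex combination to all nonzero elements of $\mathrm{co}\,N_A(y)$. Therefore $v + r\mathbb{B} \subset T_A(y)$, i.e.\ $v \in \mathrm{int}\,T_A(y)$, proving the claim.

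The step I expect to be the main obstacle is the auxiliary inclusion $N_A(y)\cap\partial\mathbb{B}\subset\partial^> d_A(y)$: moving from an abstract limiting normal to a sequence of outside points at which $d_A$ is genuinely differentiable requires the proximal constants $M_i$ to be tuned so that $x_i$ simultaneously sits outside $A$ with unique projection $y_i$ and converges to $y$. Once this bookkeeping is complete, the polar-duality calculation above is essentially immediate.
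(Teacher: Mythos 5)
Your proof is correct, and it is more self-contained and constructive than the one in the paper, which simply observes that (\ref{simpler}) forces $0 \notin \partial^{>}d_A(y)$ and then defers to \cite[Proposition 2.1]{fontes_normality_2015} for the identification $\R^{+}\,\partial^{>}d_A(y) = {\rm co}\, N_A(y)$, concluding via ``pointed closed convex cone $\Rightarrow$ polar has nonempty interior.'' You prove the substantive half of that identification yourself (the inclusion $N_A(y)\cap\partial\B \subset \partial^{>}d_A(y)$, via proximal normals and the standard fact that $d_A$ is differentiable off $A$ exactly at points with a unique nearest point, with gradient the unit direction from that point), and then, instead of invoking the pointedness criterion, you exploit the quantitative gap $c>0$ in (\ref{simpler}) to exhibit an explicit interior point $v=-\gamma_0$ together with an explicit radius $r\in(0,c)$ such that $v+r\B\subset T_A(y)=N_A(y)^{\circ}$. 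The two approaches buy the same conclusion from the same two foundational facts (the hybrid subdifferential/normal cone link and the polarity $T_A=({\rm co}\,N_A)^{\circ}$); yours is fully elementary and even quantitative (it produces a ball of radius $c$ inside the tangent cone around $-\gamma_0$), while the paper's is shorter because it outsources the cone identification and uses the softer, purely qualitative pointedness argument. The only step in your write-up that deserves a word of care is the choice of $x_i$: at the exact point $y_i+\eta_i/(2M_i)$ the nearest point need not be unique, so one should take a point strictly between $y_i$ and $y_i+\eta_i/(2M_i)$ (equivalently, enlarge $M_i$ as you indicate) to guarantee uniqueness and hence differentiability of $d_A$ there; as you note, this is routine bookkeeping and does not affect the argument.
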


\begin{proof}[\textbf{Proof of Proposition \ref{Prop3}}]
	Since the hypothesis (\ref{simpler}) considered here clearly implies that $0 \notin \partial ^{>} d_{A}(y)$, then the proof uses the same ideas of \cite[Proposition 2.1]{fontes_normality_2015} which consists in showing that the cone $\R^{+} (\partial ^{>} d_{A}(y))$ is closed and pointed and that it is also equal to ${\rm co }\ N_{A}(y)$. That is the convex hull of the limiting normal cone to $A$ is pointed and, consequently, its polar, $T_{A}(y)$ has a nonempty interior.
\end{proof}

\begin{example} \label{ex0} Consider the set $A := \{ (x,y) \ : \ h(x,y) \leq 0  \}$, where $h:\R^2 \rightarrow \R$ such  that
	\[ h(x,y) = |y| - x. \]
	It is straightforward to check that $\partial^{>} h (0,0)= {\rm co }\ \{ \gamma_1, \gamma_2 \}$, where $ \gamma_1:=(-1,+1)$, and $ \gamma_2:=(-1,-1)$. Therefore, at the point $(0,0)$, condition (\ref{simpler}) is violated when a minimizer $\bar{x}$ is such that $(0,0) \in \bar{x}([S,T])$. This is because we could find two vectors $\gamma_1$ and $\gamma_2$ such that $\gamma_1 \cdot \gamma_2 =0$. However, $\text{int }T_A(0,0) \neq \emptyset$, and more in general $\text{int }T_A(y) \neq \emptyset$ for all $y\in \partial A$. Then, (\ref{CQ for calculus of variations}) (and also (\ref{CQ for calculus of variations_second_technique})) is always satisfied.
\end{example}

This is a simple example which shows that we can find a state constraint set $A$ defined by a functional inequality (for some Lipschitz function $h$) such that (\ref{CQ for calculus of variations}) (and also (\ref{CQ for calculus of variations_second_technique})) is always verified but (\ref{simpler}) fails to hold true when a minimizer goes in a region where $A$ is nonsmooth.

\section{Proof of Theorem \ref{Thm1} ($W^{1,1}-$Local Minimizers)}
Along this section, we identify a class of  optimal control problems whose necessary optimality conditions apply in the normal form, under some constraint qualifications. The main purpose of introducing such problems is that calculus of variations problems can be regarded as an optimal control problem (owing to the `state augmentation' procedure). Therefore, the results on optimal control problems will be used to establish normality of optimality conditions for the reference calculus of variations problem (\ref{problem: calculus of variations}).
\vskip2ex

\subsection{Normality in Optimal Control Problems}
We recall that `normality' means that the Lagrangian multiplier associated with the objective function -- here written $\lambda$ -- is different from zero (it can be taken equal to 1).\\
Consider the fixed left end-point optimal control problem (\ref{problem: ocp with state constraint chap 5}) with a state constraint set $A \subset \R^n$ which is merely a closet set:

\begin{equation}\begin{cases}\label{problem: ocp with state constraint chap 5}
		\begin{aligned} 
			& {\text{minimize}}
			&& g( x(T))  \\
			&&& \hspace{-1.9cm} \text{over } x\in W^{1,1}([S,T],\R^n) \text{ and measurable functions } u \text{ satisfying}  \\
			&&& \dot{x}(t) = f(t,x(t), u(t)) \quad \textrm{ a.e. } t \in [S,T] \\
			&&& x(S)=x_{0} \\
			&&&  x(t) \in A \quad \text{ for all } t \in [S,T] \\
			&&& u(t) \in U(t) \quad \textrm{a.e. } t \in [S,T] \  . \end{aligned}\end{cases}\tag{P}\leqnomode
\end{equation}
\ \\
\noindent
The data for this problem comprise functions $g: \mathbb{R}^n \longrightarrow \mathbb{R}, f:[S,T] \times \mathbb{R}^n \times \mathbb{R}^m \longrightarrow \mathbb{R}^n,$ an initial state  $x_0 \in \mathbb{R}^n,\text{ and a multifunction } U(.): [S,T] \leadsto \mathbb{R}^m.$ The set of control functions for (\ref{problem: ocp with state constraint chap 5}), denoted by $\mathcal{U}$, is the set of all measurable functions $u: [S,T] \longrightarrow \mathbb{R}^m \text{ such that } u(t) \in U(t) \text{ a.e. } t\in [S,T].$
% A pair of functions $(x, u)$ comprising an absolutely continuous function $x \in W^{1,1}([S,T];\mathbb{R}^n)$ and $u \in \mathcal{U}$, for which the differential equation of problem $(P)$ is satisfied, is called a \textit{process}. The first component of a process is called a \textit{state trajectory}. A process for which all the constraints of problem $(P)$ are satisfied is called an \textit{admissible process}. A \textit{ minimizer } is an admissible process which achieves the minimum of $g(x(T))$ over all admissible processes $(x, u)$.
\ \\
\noindent
We say that an admissible process $(\bar{x}, \bar{u})$ is a $W^{1,1}-$\textit{local minimizer} if there exists $\epsilon >0$ such that \[ g(\bar{x}(T)) \leq g(x(T)),\] for all admissible processes $(x,u)$ satisfying \[ \| x(.)-\bar{x}(.) \| _{W^{1,1}} \leq \epsilon. \]
\noindent
There follows a `normal' version of the maximum principle for state constrained problems. For a $W^{1,1}-$local minimizer $(\bar{x}, \bar{u})$ and a positive scalar $\delta$, we assume the following:
\begin{enumerate}[label= (H\arabic*), ref=H\arabic*]
	\item \label{item: H1 ocp chap5} The function $(t,u) \mapsto f(t,x,u)$ is measurable for each $x\in \mathbb{R}^n$. There exists a measurable function $k(t,u)$ such that $t \mapsto k(t,\bar{u}(t))$ is integrable and \[ | f(t,x,u)- f(t,x',u) | \leq k(t,u) | x-x' | \] for $ x, x' \in \bar{x}(t) + \delta \mathbb{B}, \ u \in U(t), {\rm a.e.}\ t\in [S,T]$. Furthermore there exist scalars $K_f >0$ and $\varepsilon' >0 $ such that \[ | f(t,x,u)- f(t,x',u) | \leq K_f | x-x' | \] for $x, x' \in \bar{x}(S) + \delta \mathbb{B}, \ u \in U(t), \text{ a.e. } t\in [S,S+\varepsilon'].$
	\item  \label{item: H2 ocp chap5} Gr $U(.)$ is measurable.
	\item\label{item: H3 ocp chap5} The function $g$ is Lipschitz continuous on $\bar{x}(T) + \delta \mathbb{B}$.
	
\end{enumerate}
Reference is also made to the following constraint qualifications. There exist positive constants $K, \tilde \varepsilon, \tilde \beta, \tilde \rho$ and a control $\hat{u} \in \mathcal{U}$ such that
\begin{enumerate}[label= (CQ\arabic*), ref= CQ\arabic*]
	\item \label{item: CQ1 normality ocp chap 5}
	\begin{equation} \label{boundedness boundary point}
	| f(t,\bar{x}(t), \bar{u}(t))-f(t,\bar{x}(t),\hat{u}(t)) | \leq K, \quad \text{ for a.e. } t \in (\tau- \tilde \varepsilon, \tau] \cap [S,T]
	\end{equation}
	and
	\[
	\eta \cdot [f(t,\bar{x}(t), \hat{u}(t))-f(t,\bar{x}(t),\bar{u}(t))] < -\tilde \beta,
	\]
	for all $\eta \in \partial^{>}_x d_A(\bar x (s)), \text{ a.e. }  s, \; t \in (\tau - \tilde \varepsilon, \tau] \cap [S,T] $ and for all $\tau \in \lbrace \sigma \in [S,T]\; : \; \bar{x}(\sigma) \in \partial A \rbrace.$
	\item\label{item: CQ2 normality ocp chap 5} If $ x_0 \in \partial A,$ then for a.e. $t \in [S, S+ \tilde \varepsilon)$
	\begin{equation} \label{boundedness initial point}
	| f(t,x_0,\hat{u}(t)) | \leq K,  \quad | f(t,x_0,\bar{u}(t)) | \leq K, 
	\end{equation}
	and
	\[
	\eta \cdot [f(t,x_0, \hat{u}(t))-f(t,x_0,\bar{u}(t))] < - \tilde \beta \]
	for all $\eta \in  \partial^{>}_x d_A(x), \; x \in (x_0 + \tilde \rho \B) \cap \partial A .$
	\item\label{item: CQ3 normality ocp chap 5} \index{Pointed Convex Cone}
	\[
	{  \rm co }\ N_{A}(\bar{x}(t))  \text{ is pointed for each } t \in [S,T].
	\]
	
\end{enumerate}
\begin{theorem}\label{Thm3}
	Let $(\bar{x},\bar{u})$ be a $W^{1,1}-$local minimizer for (\ref{problem: ocp with state constraint chap 5}). Assume that hypotheses (\ref{item: H1 ocp chap5})-(\ref{item: H3 ocp chap5}) and the constraint qualifications (\ref{item: CQ1 normality ocp chap 5})-(\ref{item: CQ3 normality ocp chap 5}) hold. Then, there exist $p(.) \in W^{1,1}([S,T], \mathbb{R}^n)$, a Borel measure $\mu(.)$ and a $\mu-$integrable function $\gamma(.)$ such that
	\begin{enumerate}[label= (\roman{*}), ref= \roman{*}]
		\item $	- \dot{p}(t) \in {\rm co }\ \partial_{x} (q(t) \cdot f(t,\bar{x}(t), \bar{u}(t)) \; \; {\rm a.e.}\ t\in [S,T],$
		\item $	-q(T) \in \partial g (\bar{x}(T)),$ 
		\item $q(t) \cdot f(t,\bar{x}(t),\bar{u}(t)) = \max_{u\in U(t)} q(t) \cdot f(t,\bar{x}(t),u),$
		\item $	\gamma(t) \in  \partial^{>}d_A(\bar{x} (t))$ and $\textrm{supp} (\mu) \subset \lbrace t\in [S,T]\; : \; \bar{x}(t) \in \partial A \rbrace,$
	\end{enumerate}
	where
	\begin{equation*} q(t) = \begin{cases}
			p(S)   & \quad  t =S   \\
			p(t) + \int_{[S,t]} \gamma(s) d\mu(s)   & \quad t \in (S,T].
		\end{cases} \end{equation*}
	\end{theorem}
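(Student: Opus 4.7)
The plan is to invoke the non-normal $W^{1,1}$-local maximum principle for state-constrained problems (see \cite{vinter_optimal_2010}, Ch.~9) on the process $(\bar x, \bar u)$. Hypotheses (\ref{item: H1 ocp chap5})--(\ref{item: H3 ocp chap5}) ensure its applicability and yield multipliers $\lambda \geq 0$, $p(\cdot) \in W^{1,1}([S,T],\R^n)$, a nonnegative Borel measure $\mu$ on $[S,T]$ supported in $\{t : \bar x(t) \in \partial A\}$, and a $\mu$-measurable selection $\gamma(t) \in \partial^{>}d_A(\bar x(t))$ verifying the adjoint inclusion (i), the Hamiltonian maximization (iii), the support/selection condition (iv), the transversality $-q(T) \in \lambda \partial g(\bar x(T))$, together with the non-triviality $\lambda + \|\mu\|_{TV} + \|p\|_\infty > 0$. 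Normality is the assertion that $\lambda$ may be taken strictly positive (and then, by rescaling, equal to $1$), so the argument is by contradiction: set $\lambda = 0$ and aim to derive $p \equiv 0$ together with $\mu \equiv 0$.

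The heart of the argument is the exclusion of $\mu \neq 0$. Let $\tau \in \textrm{supp}(\mu)$; by (iv), $\bar x(\tau) \in \partial A$. Focus first on $\tau > S$ and use (\ref{item: CQ1 normality ocp chap 5}) with auxiliary control $\hat u$. The Hamiltonian inequality (iii) tested on $\hat u$ gives $q(t) \cdot \Delta f(t) \leq 0$ a.e., where $\Delta f(t) := f(t, \bar x(t), \hat u(t)) - f(t, \bar x(t), \bar u(t))$. Substituting $q(t) = p(t) + \int_{[S,t]}\gamma(s)d\mu(s)$ and integrating against $d\mu$ on a small left-neighborhood $(\tau - \sigma, \tau]$, a Fubini-type decomposition of the double-measure contribution bounds it above by $-\tfrac{1}{2}\tilde\beta\, \mu((\tau-\sigma,\tau])^2$, thanks to $\gamma(s) \cdot \Delta f(t) < -\tilde\beta$ from (\ref{item: CQ1 normality ocp chap 5}); the remaining single-measure and absolutely continuous contributions are bounded linearly in $\mu((\tau-\sigma,\tau])$ using the uniform estimate $|\Delta f(t)| \leq 2K$ from (\ref{boundedness boundary point}) together with the boundedness of $p$ and of $\nu(\tau^-)$. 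For $\sigma$ sufficiently small the quadratic term strictly dominates, forcing $\mu((\tau-\sigma,\tau]) = 0$, which contradicts $\tau \in \textrm{supp}(\mu)$. The boundary case $\tau = S$ with $x_0 \in \partial A$ is handled identically using (\ref{item: CQ2 normality ocp chap 5}) and (\ref{boundedness initial point}). Proposition \ref{Prop2}(ii), combined with the pointedness (\ref{item: CQ3 normality ocp chap 5}), ensures $0 \notin \partial^{>}d_A(\bar x(t))$ so that the directional sign information on $\gamma$ is not vacuous.

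Once $\mu \equiv 0$, the variable $q$ coincides with $p$; the transversality relation under $\lambda = 0$ forces $p(T) = 0$, and (i) reduces to a linear $W^{1,1}$-differential inclusion in $p$ alone, with integrable Lipschitz kernel supplied by (\ref{item: H1 ocp chap5}). A Gronwall-type estimate yields $p \equiv 0$, contradicting non-triviality and closing the argument. The main obstacle I anticipate lies in the careful quantitative comparison between the quadratic-in-$\mu$ and linear-in-$\mu$ contributions in the displayed measure estimate: this calls for the strict uniform bound on $|\Delta f|$ genuinely built into (\ref{boundedness boundary point})--(\ref{boundedness initial point}), rather than only the pointwise-in-$x$ Lipschitz property from (\ref{item: H1 ocp chap5}), and it relies on right-continuity of $\nu$ on $(S,T)$ to pass to the limit $\sigma \to 0$ cleanly. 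This is precisely the ingredient that upgrades the $L^\infty$-local normality of \cite{fontes_normal_2013} to the $W^{1,1}$-local statement required here.
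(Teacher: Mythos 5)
Your proposal does not follow the paper's route: the paper proves Theorem \ref{Thm3} by a two-line reduction, augmenting the state with $\dot y(t)=|f(t,x(t),u(t))-\dot{\bar x}(t)|$ and the endpoint condition $y(T)\in\alpha\B$ so that the $W^{1,1}$-local minimizer becomes an $L^\infty$-local minimizer of (\ref{augmented problem}), and then citing the already-normal maximum principle of \cite[Theorem 4.2]{fontes_normal_2013} (the extra adjoint component vanishes by transversality). You instead attempt to re-prove normality from scratch. That would be a legitimate alternative if it worked, but the central step of your argument --- the exclusion of $\mu\neq 0$ --- has a genuine gap.

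First, a technical point: the Weierstrass inequality $q(t)\cdot\Delta f(t)\le 0$ holds for Lebesgue-a.e.\ $t$, while $\mu$ may be singular (e.g.\ purely atomic), so integrating that inequality against $d\mu$ is not justified. Second, and more seriously, even granting that integration, the signs do not produce a contradiction. Your decomposition gives
$\int_I q\cdot\Delta f\,d\mu \le C\,\mu(I)-\tfrac{1}{2}\tilde\beta\,\mu(I)^2$,
and the Weierstrass condition gives $\int_I q\cdot\Delta f\,d\mu\le 0$: these are two upper bounds on the same quantity, both compatible with any value of $\mu(I)$, so nothing forces $\mu(I)=0$ (and as $\sigma\downarrow 0$ it is the linear term, not the quadratic one, that dominates). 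The constraint qualification (\ref{item: CQ1 normality ocp chap 5}) only makes the double-measure term \emph{more negative}; to contradict an upper bound of $0$ you need a \emph{positive lower} bound on $q(t)\cdot\Delta f(t)$, and that requires knowing $q$ is small near the contact time. Concretely, the argument must run in the opposite order: under $\lambda=0$ transversality gives $q(T)=0$; on the final boundary-free interval $(\bar\tau,T]$ (with $\bar\tau=\max\operatorname{supp}\mu$) one has $q=p+\nu(\bar\tau)$ absolutely continuous with $|\dot q|\le k(t,\bar u(t))|q|$, so Gronwall yields $q\equiv 0$ there and hence $q(\bar\tau)=0$; only then does writing $q(t)=\bigl(p(t)-p(\bar\tau)\bigr)-\int_{(t,\bar\tau]}\gamma\,d\mu$ turn the Weierstrass inequality, via (\ref{item: CQ1 normality ocp chap 5}) and (\ref{boundedness boundary point}), into the pointwise lower-bound estimate $\tilde\beta\,\mu((t,\bar\tau])\le 2K\,|p(t)-p(\bar\tau)|$, from which a further Gronwall-type iteration kills $\mu$ and freezes $p$ backward in time. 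Your sketch uses $q(T)=0$ only \emph{after} $\mu\equiv 0$ has supposedly been established, so the anchor needed for the measure-exclusion step is missing and the claimed contradiction never materializes. Either repair the argument along these lines, or simply follow the paper and reduce to \cite[Theorem 4.2]{fontes_normal_2013} via the augmented problem.
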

	\begin{proof}
		
	This result was proved in \cite{fontes_normal_2013} for $L^{\infty}-$local minimizers and for a state constraint expressed as an inequality function. It remains valid for the weaker case of $W^{1,1}-$local minimizers (by adding eventually an extra variable $\dot{y}(t) = |f(t,x(t),u(t))-\dot{\bar{x}}(t)|$) and for a state constraint expressed in terms of a closed set $A$.
	
	\noindent
	Indeed, let $(\bar x,\bar u)$ be merely a $W^{1,1}-$local minimizer for (\ref{problem: ocp with state constraint chap 5}). Then there exists $\alpha>0$ such that $(\bar x, \bar y \equiv 0, \bar u)$ is a $L^\infty-$local minimizer for
	\begin{equation*}\begin{cases}
	\begin{aligned} \label{augmented problem}
	& {\text{minimize}}
	&& g( x(T))  \\
	&&& \hspace{-1.9cm} \text{over } x\in W^{1,1}([S,T],\R^n) \text{ and measurable functions } u \text{ satisfying}  \\
	&&& \dot{x}(t) = f(t,x(t), u(t)) \quad \textrm{ a.e. } t \in [S,T] \\
	&&& \dot{y}(t) = |f(t,x(t),u(t))-\dot{\bar{x}}(t)| \quad \textrm{ a.e. } t \in [S,T] \\
	&&& (x(S),y(S), y(T)) \in \{x_{0}\} \times \{ 0 \} \times \alpha\B \\
	&&&  x(t) \in A \quad \text{ for all } t \in [S,T] \\
	&&& u(t) \in U(t) \quad \textrm{a.e. } t \in [S,T] \  . \end{aligned}\end{cases}\tag{P1}\leqnomode
	\end{equation*}
	(It suffices to prove it by contradiction.) Therefore we can apply \cite[Theorem 4.2]{fontes_normal_2013} in its normal formulation to the augmented problem (\ref{augmented problem}) with reference to the $L^\infty-$local minimizer $(\bar x, \bar y \equiv 0, \bar u)$. After expliciting the corresponding necessary optimality conditions, we notice that the adjoint arc $r$ associated with the state variable $y$ is constant because the right hand side of the dynamics does not depend on $y$. Moreover, $r=0$ owing to the transversality condition since $\bar y(T) \in \alpha \B$ is inactive. Hence, we deduce the required necessary optimality conditions as represented in Theorem \ref{Thm3} covering the case of $W^{1,1}-$local minimizers.
	\end{proof}
	\begin{remark} \label{remark1}
		Normality of the necessary optimality conditions for state constrained optimal control problems has been well studied and many results exist in the literature where each result requires some regularity assumptions on the problem data (cf. \cite{fontes_normal_2013}, \cite{frankowska2009normality}, \cite{frankowska2013inward}, \cite{fontes_normality_2015} etc.). In \cite{fontes_normal_2013}, normality is established for free right-endpoint optimal control problems having $L^{\infty}-$local minimizers. The constraint qualifications assumed on the data are in the form of (\ref{item: CQ1 normality ocp chap 5}) and (\ref{item: CQ2 normality ocp chap 5}) but considered for an inequality state constraint of a possibly nonsmooth function. This result was extended in \cite{fontes_normality_2015} (considering always the case of $L^\infty-$local minimizers) to cover a larger class of problems where the state constraint is given in terms of a closed set and the right-endpoint (denoted by the set $K_1$ in \cite{fontes_normality_2015}) is considered to be a closed subset of $\R^n$. New (and weaker) constraint qualifications are discussed in \cite{fontes_normality_2015} to guarantee normality when $\bar{x}(T) \in \text{int }K_1$: the inward pointing condition has just to be satisfied for almost all times at which the optimal trajectory has an outward pointing velocity. Moreover in \cite{fontes_normality_2015} relations with previous constraint qualifications are examined.
		
		\noindent 
		Theorem \ref{Thm3} might seem to be a particular case of \cite[Theorem 3.2]{fontes_normality_2015} when $K_1 = \R^n$ (eventually after considering the augmented problem in order to cover the case of $W^{1,1}-$local minimizers). However, looking closely to assumption (H3) in \cite{fontes_normality_2015}, we see that (H3) is a stronger assumption than the one considered in previous papers (see for instance \cite{ferreira_nondegenerate_1999}, \cite{lopes_constraint_2011}, \cite{fontes_normal_2013}) and in assumptions (\ref{boundedness boundary point}) and (\ref{boundedness initial point}) considered in our paper. Indeed, in (\ref{boundedness boundary point}) and (\ref{boundedness initial point}), the boundedness of the dynamics is considered only for the optimal control $\bar u(.)$ and the constructed control $\hat u(.)$ at the initial data $x_0$ and at the optimal trajectory points $\bar x(t)$ only for times $t < \tau$ at which the trajectory hits the boundary of the state constraint, however in \cite{fontes_normality_2015}, the boundedness concerns all controls $u \in U(t)$ and all $x$ near the optimal trajectory $\bar x(.)$, which is not implied by the assumptions of our paper.
		
		\noindent
		We underline the fact that to derive the desired necessary conditions stated in Theorem \ref{Thm1} above, a crucial requirement is the possibility to apply normality results for optimal control problems with dynamics which might be unbounded (cf. Remark \ref{remark2}).
	\end{remark}

	\subsection{Technical Lemmas}
	
	We invoke now two technical lemmas which are crucial for establishing the proof of Theorem \ref{Thm1}.
	%\begin{lemma}{(cf. \cite[Theorem 11.5.1]{vinter_optimal_2010})}\label{lemma: lipschitz conituity minmizer calculus of variations}
	%	Let $\bar{x}$ be a $W^{1,1}-$local minimizer for (\ref{problem: calculus of variations}) for which we assume that hypotheses (\ref{item: CV1})-(\ref{item: CV3}) are satisfied, then $\bar{x}$ is a Lipschitz continuous function.
	%\end{lemma}
	The first lemma says that one can select a particular bounded control $v(.)$ which pulls the dynamics inward the state constraint set more than the reference minimizer.
	
	\begin{lemma} \label{Lemma3} Let $\bar{x}(.)$ be a Lipschitz $W^{1,1}-$local minimizer for (\ref{problem: calculus of variations}) for which (\ref{item: CV1}) and (\ref{CQ for calculus of variations}) are satisfied. Then, there exist positive constants $\varepsilon, \; \rho, \; \beta, \; C, \; C_1$ and a measurable function $v(.)$ such that:
		\begin{enumerate}[label=(\roman{*}), ref=\roman{*}]
			\item\label{item: control existence bounded chapter CV} $ \| v -\dot{\bar{x}} \| _{L^{\infty}} \leq C$ and $\| v \| _{L^{\infty}} \leq C_{1}.$
			\item \label{item: control existence ipc1 chapter CV}For all $ \tau \in \lbrace \sigma \in [S,T] : \bar{x}(\sigma) \in \partial A \rbrace$,
			\[
			\mathop {\sup }\limits_{\begin{array}{*{20}c}
				{\eta  \in {\rm co} \left(N_A(\bar{x}(s)) \cap  \partial \mathbb{B}\right) }  \\
				\end{array}}   (v(t)-\dot{\bar{x}}(t)) \cdot \eta < -\beta, \quad {\rm a.e. } \ s,t \in (\tau-\varepsilon, \tau].
			\]
			\item\label{item: control existence ipc2 chapter CV} If $x_0=\bar{x}(S) \in \partial A$, then
			\[
			\mathop {\sup }\limits_{\begin{array}{*{20}c}
				{\eta  \in {\rm co} \left(N_A(x) \cap  \partial \mathbb{B}\right) } \\
				{x \in (x_0 + \rho {\mathbb{B}}) \cap \partial A.}  \\
				\end{array}}
			(v(t)- \dot{\bar{x}}(t))\cdot \eta < - \beta, \quad {\rm a.e. } \ t \in [S,S+\varepsilon). \]
		\end{enumerate}
	\end{lemma}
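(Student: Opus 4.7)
The strategy is to exploit the constraint qualification (\ref{CQ for calculus of variations}) to construct, locally near every boundary-hitting time, an inward-pointing perturbation of the reference velocity $\dot{\bar x}$, and then glue these local perturbations into a single measurable function $v(\cdot)$ via a partition of unity. The key inputs are the polarity between the Clarke tangent and normal cones, the upper semi-continuity of the limiting normal cone, and the Lipschitz continuity of $\bar x$.

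The set $K := \{t \in [S,T] : \bar x(t) \in \partial A\}$ is compact by continuity of $\bar x$. For each $z \in \bar x(K)$, (\ref{CQ for calculus of variations}) supplies $\xi_z \in \mathrm{int}\,T_A(z)$ and $r_z > 0$ with $\xi_z + r_z\mathbb{B} \subset T_A(z)$; by the polarity between the Clarke tangent and normal cones this gives $\xi_z \cdot \eta \le -r_z$ for every $\eta \in \mathrm{co}(N_A(z)\cap\partial\mathbb{B})$. Since the limiting normal cone has closed graph, the multifunction $z \mapsto N_A(z)\cap\partial\mathbb{B}$ is upper semi-continuous on $A$ (values in the compact set $\partial\mathbb{B}$), so a standard approximation argument yields $\rho_z > 0$ such that
\[
\xi_z \cdot \eta \le -r_z/2 \quad \text{for all } z' \in (z+\rho_z\mathbb{B})\cap A \text{ and all } \eta \in \mathrm{co}(N_A(z')\cap\partial\mathbb{B}).
\]

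By compactness of $\bar x(K)$, I extract a finite subcover $\{B(z_j, \rho_j/2)\}_{j=1}^N$ with $z_j \in \bar x(K)$ and set $V_j := \bar x^{-1}(B(z_j,\rho_j/2))$; these open subsets of $[S,T]$ cover $K$. Let $L$ denote the Lipschitz constant of $\bar x$. I choose a continuous partition of unity $\{\phi_j\}_{j=1}^N$ subordinate to $\{V_j\}$ (extended by zero outside $V_j$) with $\sum_j \phi_j \equiv 1$ on some open neighborhood $W$ of $K$ satisfying $W \subset \bigcup_j V_j$; this is standard via Urysohn's lemma applied to the compact $K$ and the open $\bigcup_j V_j$. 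Define
\[
v(t) := \dot{\bar x}(t) + \sum_{j=1}^N \phi_j(t)\,\xi_j, \quad t \in [S,T].
\]
Measurability is immediate, and (\ref{item: control existence bounded chapter CV}) holds with $C := \max_j |\xi_j|$ and $C_1 := \|\dot{\bar x}\|_{L^\infty}+C$. Set $\sigma := \mathrm{dist}(K, W^c) > 0$, $\varepsilon := \min(\sigma, \min_j \rho_j/(4L))$, $\beta := \min_j r_j/4$ and $\rho := \min_j \rho_j/4$. To verify (\ref{item: control existence ipc1 chapter CV}): for $\tau \in K$ and $s,t \in (\tau - \varepsilon, \tau]$, one has $t \in W$ (so $\sum_j \phi_j(t) = 1$), and for every $j$ with $\phi_j(t) > 0$ the Lipschitz estimate forces $\bar x(s) \in B(z_j,\rho_j)\cap A$; summing the Step~2 bound with weights $\phi_j(t)$ then yields $(v(t)-\dot{\bar x}(t))\cdot\eta \le -\min_j r_j/2 < -\beta$. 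Assertion (\ref{item: control existence ipc2 chapter CV}) follows by the same reasoning applied at $\tau = S$, now with $x \in (x_0+\rho\mathbb{B})\cap\partial A$ in place of $\bar x(s)$: the triangle inequality $|x - z_j| \le |x-x_0| + |x_0-\bar x(t)| + |\bar x(t)-z_j| < \rho + L\varepsilon + \rho_j/2 \le \rho_j$ again places $x$ inside the validity ball of Step~2.

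\emph{Main obstacle.} The delicate point is producing the constants $\varepsilon, \beta, \rho$ uniformly in $\tau \in K$; without this, the lemma's statement would only hold locally in $\tau$. The uniformity rests on three ingredients working together: the finite subcover (furnishing a minimum positive radius $\min_j \rho_j$), the positive separation $\mathrm{dist}(K, W^c) > 0$ between the compact set $K$ and the complement of the region where the partition of unity sums to one (so $(\tau-\varepsilon, \tau] \subset W$ for every $\tau \in K$), and the Lipschitz estimate on $\bar x$ (which converts the time-tolerance $\varepsilon$ into a state-tolerance admissible for the Step~2 bound to be applied simultaneously at the variables $t$ and $s$).
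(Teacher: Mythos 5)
Your construction is correct and follows essentially the same route as the paper's: a finite cover of the boundary-contact region by balls carrying vectors from $\mathrm{int}\, T_A$ that are uniformly inward-pointing on neighborhoods (this is exactly the content of the paper's Lemma \ref{Lemma2}), glued along the trajectory into a bounded measurable perturbation of $\dot{\bar{x}}$. The only differences are cosmetic: you weight the vectors $\xi_j$ by a continuous partition of unity in $t$, whereas the paper uses the characteristic functions $\chi_{z_j+\frac{\epsilon_j}{2}\mathrm{int}\,\mathbb{B}}(\bar{x}(t))$, and you re-derive the covering statement inline (via polarity of $T_A$ and $\mathrm{co}\,N_A$ plus upper semicontinuity of the normal cone) rather than quoting it.
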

	
	For the proof of Lemma \ref{Lemma3}, we shall invoke the following technical lemma:
	\begin{lemma} \label{Lemma2} Fix $R>0$. Assume that	${\text{ int }}\ T_A(z) \neq \emptyset$ for all $z \in \partial A \cap (R+1) \B$. Then we can find positive numbers $ \beta, ~\epsilon_{0}, ~\epsilon_{1},\ldots, \epsilon_{k}$, points $z_{1}, \ldots, z_{k} \in \partial{A} \cap (R+1)\mathbb{B}$, and vectors $\zeta_{j} \in {\rm int }\ T_{A}(z_{j}), \text{ for } j=1,\ldots, k,$ \text{such that}
		\begin{enumerate}[label = (\roman{*}), ref= \roman{*}]
			\item \label{item: i lemma appendix chap 5} $ \mathop  \bigcup \limits_{j = 1}^k  (z_j+ \frac{\epsilon_j}{2} {\rm int }\ \mathbb{B})  \supset (\partial A + \epsilon_{0} \mathbb{B}) \cap (R+1)\mathbb{B}$ ,
			\item \label{item: ii lemma appendix chap 5} \[ \mathop {\sup }\limits_{\begin{array}{*{20}c}
				{\eta  \in { \rm co }(N_A(z) \cap \partial \mathbb{B})}  \\
				{z \in (z_j + \epsilon_{j} {\mathbb{B}}) \cap \partial A.}  \\
				\end{array}}
			\zeta_j \cdot \eta < -\beta \ , \quad \text{for all } j=1, \ldots, k \ . \] \end{enumerate}
	\end{lemma}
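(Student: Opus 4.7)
My plan is to build the cover point-by-point on the compact set $\partial A \cap (R+1)\B$, establish the strict inequality (ii) at each selected center with a uniform neighborhood via an outer semicontinuity argument for the limiting normal cone, and then extract a finite subcover by compactness, tuning $\epsilon_0$ at the end through a standard thickening argument. Fix $z \in \partial A \cap (R+1)\B$. Since $T_A(z) = (\text{co}\, N_A(z))^\circ$ has nonempty interior, pick any $\zeta_z \in \text{int}\, T_A(z)$; standard polar-cone facts yield $\zeta_z \cdot \eta < 0$ for every $\eta \in N_A(z) \cap \partial \B$, and compactness of this set supplies $\beta_z > 0$ with $\zeta_z \cdot \eta \leq -\beta_z$ throughout. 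The bound extends by linearity in convex combinations to all of $\text{co}(N_A(z) \cap \partial \B)$.

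\textbf{Local propagation.} The key claim is that there exists $\epsilon_z > 0$ such that $\zeta_z \cdot \eta < -\beta_z/2$ for every $z' \in (z + \epsilon_z \B) \cap \partial A$ and every $\eta \in \text{co}(N_A(z') \cap \partial \B)$. I argue by contradiction: if the claim fails, extract $z_m \to z$ in $\partial A$ and $\eta_m \in \text{co}(N_A(z_m) \cap \partial \B)$ with $\zeta_z \cdot \eta_m \geq -\beta_z/2$. Carath\'eodory's theorem writes $\eta_m = \sum_{i=0}^{n} \lambda_i^m \gamma_i^m$ with $\gamma_i^m \in N_A(z_m) \cap \partial \B$ and $(\lambda_i^m)_i$ simplex weights. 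Passing to subsequences, $\gamma_i^m \to \gamma_i^* \in \partial \B$ (unit norms persist) and $\lambda_i^m \to \lambda_i^*$; outer semicontinuity of the limiting normal cone $N_A$ then forces $\gamma_i^* \in N_A(z) \cap \partial \B$, so $\eta_m \to \sum_i \lambda_i^* \gamma_i^* \in \text{co}(N_A(z) \cap \partial \B)$. Passing to the limit in $\zeta_z \cdot \eta_m \geq -\beta_z/2$ produces a vector violating the pointwise bound $\zeta_z \cdot (\cdot) \leq -\beta_z$, a contradiction.

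\textbf{Finite cover, thickening, and main obstacle.} The open balls $\{z + (\epsilon_z/2)\,\text{int}\, \B : z \in \partial A \cap (R+1)\B\}$ form an open cover of the compact set $\partial A \cap (R+1)\B$; extracting a finite subcover produces centers $z_1, \ldots, z_k$, and setting $\epsilon_j := \epsilon_{z_j}$, $\zeta_j := \zeta_{z_j}$, and $\beta := \min_j \beta_{z_j}/2$ delivers (ii). For (i), let $V := \bigcup_j (z_j + (\epsilon_j/2)\,\text{int}\, \B)$; then $V$ is open and contains the compact set $\partial A \cap (R+1)\B$, so a routine sequential argument (if no $\epsilon_0$ worked, extract $x_m \in (\partial A + m^{-1}\B) \cap (R+1)\B \setminus V$ converging to $x_\infty \in \partial A \cap (R+1)\B \subset V$, contradicting openness of $V$) produces $\epsilon_0 > 0$ realizing (i). The main technical obstacle is the local propagation step: it relies on the outer semicontinuity of $z \mapsto \text{co}(N_A(z) \cap \partial \B)$, which requires Carath\'eodory's theorem together with the fact that unit norms are preserved under limits; note that the hypothesis $\text{int}\, T_A(z) \neq \emptyset$ implicitly guarantees $\text{co}\, N_A(z)$ is pointed (so $0 \notin \text{co}(N_A(z) \cap \partial \B)$ by Proposition \ref{Prop2}(ii)), which is what makes the strict pointwise bound non-vacuous.
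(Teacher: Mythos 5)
Your proof is correct. Note first that the paper contains no proof of Lemma \ref{Lemma2} to compare against: it is stated as a technical ingredient for Lemma \ref{Lemma3}, whose own proof is only sketched with full details deferred to the thesis \cite{khalil:tel-01740334}. Your argument is the natural one and all its steps are sound: the strict bound $\zeta_z \cdot \eta \le -\beta_z$ on the compact set $N_A(z)\cap \partial\mathbb{B}$ follows from $\zeta_z \in \mathrm{int}\, T_A(z) = (\mathrm{co}\, N_A(z))^{\circ}$ (an interior polar vector pairs strictly negatively with every nonzero element of the cone), it extends to the convex hull by linearity, and the propagation to a neighbourhood of $z$ via Carath\'eodory decompositions, preservation of unit norms in the limit, and the closed-graph (outer semicontinuity) property of the limiting normal cone along $\partial A$ is exactly the right mechanism; the finite subcover and the sequential thickening argument producing $\epsilon_0$ are routine and correctly executed. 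One cosmetic repair: your propagation step gives $\zeta_{z_j}\cdot\eta < -\beta_{z_j}/2$ pointwise, so with $\beta := \min_j \beta_{z_j}/2$ you can only conclude $\sup(\cdot) \le -\beta$ at a worst-case index, whereas (\ref{item: ii lemma appendix chap 5}) demands a strict inequality on the supremum; taking $\beta := \tfrac14 \min_j \beta_{z_j}$ (or any value strictly below $\min_j \beta_{z_j}/2$) closes this trivially.
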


	\begin{proof} [\textbf{Proof of Lemma \ref{Lemma3}}] 
		
	 Indeed, by defining the function $v(.): [S,T] \rightarrow \mathbb{R}^{n}$ as follows
		\[ v(t) := \dot{\bar{x}}(t) + \sum \limits_{j=1}^{k} \chi_{z_{j}+ \frac{\epsilon_{j}}{2} {\rm int }\ \mathbb{B}} (\bar{x}(t)) \; \zeta_{j}, \quad \quad t \in [S,T] \]
		where $z_j$, $\epsilon_j$ and $\zeta_j$ are respectively the points, positive numbers and vectors of Lemma \ref{Lemma2}. $ \chi_{Y}$ denotes the characteristic function of the subset $ Y \subset \mathbb{R}^{n}$, and owing to the assertions of Lemma \ref{Lemma2},  we can prove that the constructed $v(.)$ verifies the statement of Lemma \ref{Lemma3}.	(We refer the reader to \cite{khalil:tel-01740334}  for full details of the proof.)
		
	\end{proof}
	
	\subsection{Proof of Theorem \ref{Thm1}}

		We first employ a standard argument, the so-called `state augmentation', which allows to write the problem of calculus of variations (\ref{problem: calculus of variations}) as an optimal control problem of type (\ref{problem: ocp with state constraint chap 5}).	Indeed, it is enough to add an extra absolutely continuous state variable \[ \index{State Augmentation}z(t)= \int_{S}^{t} L(s,x(s), \dot{x}(s))ds\]
		and consider the dynamics $\dot{x} =u$. We notice that $z(S)=0$ and $U(t)= \mathbb{R}^n$.
		\vskip2ex
		\noindent
		Then, the problem (\ref{problem: calculus of variations}) can be written as the optimal control problem (\ref{ocp by state augmentation chap 5}):
		
		\begin{equation}\begin{cases}
				\begin{aligned} \label{ocp by state augmentation chap 5}
					& {\text{minimize}}
					&& z(T)  \\
					&&& \hspace{-1.9cm} \text{over } W^{1,1}-\text{ arcs } (x(.),z(.)) \text{ and measurable functions } u(.) \in \R^n \text{ satisfying}  \\
					&&& (\dot{x}(t), \dot z(t)) = (u(t),L(t,x(t),u(t))) \quad \textrm{ a.e. } t \in [S,T] \\
					&&& (x(S), z(S)) = (x_{0},0) \\
					&&&  x(t) \in A \quad \text{ for all } t \in [S,T] \\
					&&& u(t) \in U(t)=\R^n \quad \textrm{a.e. } t \in [S,T] \  . \end{aligned}\end{cases}\tag{P$'$}\leqnomode
		\end{equation}
		
		We set $w(t):=
		\begin{pmatrix}
		x(t) \\
		z(t)
		\end{pmatrix}$ and $\tilde{f}{(t,w(t), u(t))} :=
		\begin{pmatrix}
		u(t) \\ L(t,x(t),u(t))
		\end{pmatrix}.$
		Here the set of controls $\mathcal{U}$ is the set of dynamics $\dot{x}(t) \in \mathbb{R}^{n}$ for a.e. $t \in [S,T]$.

		It is easy to prove that if $\bar x$ is a $W^{1,1}-$local minimizer for the reference calculus of variations problem (\ref{problem: calculus of variations}), then  $\left(\bar{w} (t)= \begin{pmatrix}
		\bar{x}(t) \\
		\bar{z}(t)
		\end{pmatrix}, \bar{u} \right)$ is a $W^{1,1}-$local minimizer for (\ref{ocp by state augmentation chap 5}) where $\dot{\bar{z}}(t)= L(t,\bar{x}(t), \bar{u}(t))$ and $\bar{u}= \dot{\bar{x}}$.

		\vskip2ex
		\noindent
		The proof of Theorem \ref{Thm1} is given in three steps. The first step is devoted to show that the constraint qualifications (\ref{item: CQ1 normality ocp chap 5})-(\ref{item: CQ3 normality ocp chap 5}) of Theorem \ref{Thm3} are mainly implied by (\ref{CQ for calculus of variations}) (of Theorem \ref{Thm1}). In step 2, we verify that hypotheses (\ref{item: H1 ocp chap5})-(\ref{item: H3 ocp chap5}) of Theorem \ref{Thm3} can be deduced from hypothesis (\ref{item: CV1}) of  Theorem \ref{Thm1}. In step 3, we apply Theorem \ref{Thm3} to (\ref{ocp by state augmentation chap 5}) in order to obtain the assertions of Theorem \ref{Thm1}.
		\vskip2ex
		\noindent
		{\bf Step 1.} Prove that the constraint qualifications (\ref{item: CQ1 normality ocp chap 5})-(\ref{item: CQ3 normality ocp chap 5}) of Theorem \ref{Thm3} are mainly implied by (\ref{CQ for calculus of variations}) (of Theorem \ref{Thm1}).
		\vskip1ex
		\noindent
		The constraint qualifications (\ref{item: CQ1 normality ocp chap 5}) and (\ref{item: CQ2 normality ocp chap 5}) for the optimal problem (\ref{ocp by state augmentation chap 5}) become as follows: there exist positive constants $K, \tilde \varepsilon, \tilde \beta, \tilde \rho$ and a control $\hat{u} \in \mathcal{U}$ such that
		\begin{enumerate}[label= (CQ\arabic*)$'$ , ref= (CQ\arabic*)$'$]
			\item \label{CQ1'}
			\begin{eqnarray} \label{8}
				| \tilde{f}{(t,(\bar{x}(t),\bar{z}(t)),\bar{u}(t))}-\tilde{f}{(t,(\bar{x}(t),\bar{z}(t)),\hat{u}(t))} | \leq K, \; \text{a.e. } t \in (\tau- \tilde \varepsilon, \tau] \cap [S,T]
			\end{eqnarray}
			and
			\begin{equation} \label{9}
				\tilde{\eta} \cdot [\tilde{f}{(t,(\bar{x}(t),\bar{z}(t)), \hat{u}(t))}-\tilde{f}{(t,(\bar{x}(t),\bar{z}(t)),\bar{u}(t))}] < -\tilde \beta,
			\end{equation}
			for all $\tilde{\eta} \in \partial^{>}_{w} d_{A}(\bar{x}(s)) \text{ a.e.}\ s, \; t \in (\tau - \tilde \varepsilon, \tau] \cap [S,T] \text{ and for all } \tau \in \lbrace \sigma \in [S,T]\; : \; \bar{x}(\sigma) \in \partial A \rbrace$. Here	$
			\partial_w^> d_A(\xb(s)) :={\rm co\,} \{  (a,b): \text{there exists } \; s_i \rightarrow s \text{ such that } d_A(\xb(s_i))>0 \, \,  \text{for all } i \ , d_A(\xb(s_i)) \rightarrow d_A(\xb(s))\text{ and } \nabla_{w}d_A (\xb(s_i)) \rightarrow(a,b) \}.
			$
			\item \label{CQ2'} If $ x_0 \in \partial A$ then for a.e. $ t \in [S, S+ \tilde \varepsilon)$
			\begin{equation} \label{5}
				| \tilde{f}{(t,(x_0,0),\hat{u}(t))} | \leq K \quad \text{ and } \quad
				| \tilde{f}{(t,(x_0,0),\bar{u}(t))} | \leq K,
			\end{equation}
			and  \begin{equation} \label{7} \tilde{\eta} \cdot [\tilde{f}{(t,(x_0,0), \hat{u}(t))}-\tilde{f}{(t,(x_0,0),\bar{u}(t))}] < -\tilde \beta,
			\end{equation} for all $\tilde{\eta} \in \partial^{>}_{w} d_{A} (x), \; x \in (x_0 + \tilde \rho \B) \cap \partial A$. Here,
			$
			\partial_w^> d_A(x)={\rm co\,} \{  (a,b):\text{there exists } \; x_i \xrightarrow{d_A} x \text{ such that } d_A(x_i)>0 \, \,  \text{for all } i
			\; \text{ and } \; \nabla_{w}d_A (x_i) \rightarrow(a,b) \}.
			$
		\end{enumerate}
		\vskip2ex
		\noindent
		{\bf 1.} We start with the proof of condition (\ref{5}).
		
		\noindent
		By the Lipschitz continuity of the minimizer $\bar{x}$, we take $R:=| x_0 | + \| \dot{\bar{x}} \|_{L^{\infty}} (1+T-S) >0$. Then $\bar{x}([S,T])\subset R \mathbb{B}$ and $| \dot{\bar{x}}(t) | \leq R$ for a.e. $t \in [S,T].$ Take $x_0 \in \partial A$. Since the function $L(.,.,.)$ is bounded on bounded sets (owing to (\ref{item: CV1})), we obtain
		\[
		| \tilde{f}{(t,(x_0,0), \bar{u}(t))} | = \left| \begin{pmatrix}
		\bar{u}(t)= \dot{\bar{x}}(t) \\
		L(t,x_0,\bar{u}(t))
		\end{pmatrix} \right| \leq K \quad \text{ a.e. } t \in [S,T], \ \text{ for some } K > 0.
		\]
		Moreover, under (\ref{CQ for calculus of variations}), Lemma \ref{Lemma3} ensures the existence of a measurable function $v(.)$ and positive constants $C$ and $C_1$ such that $\| v \|_ {L^{\infty}} \leq C_1$ and $\| v - \dot{\bar{x}} \|_ {L^{\infty}} \leq C.$ Therefore, by choosing a control $\hat u$ such that $\hat u(t)= v(t)$ a.e. $t$, we deduce easily that
		\[ | \tilde{f}(t,(x_0,0),\hat u(t)) | \le K \quad \text{a.e. } t \in [S,T]  \quad \text{for some } K>0 . \] Condition (\ref{5}) is therefore satisfied.
		
		\vskip2ex
		\noindent
		{\bf 2.} Condition (\ref{8}) follows also (for $K$ big enough) from the particular choice of $\hat u (t)$ to be the bounded control $v(.)$ of Lemma \ref{Lemma3}, and from (\ref{item: CV1}).

%		Now we invoke Lemma \ref{Lemma3} and we consider the control function $\hat u : [S,S+\varepsilon] \rightarrow \R$ to be defined as $\hat{u}(t):= v(t)$, where $v(t)$ is a measurable function which satisfies the assertions of Lemma \ref{Lemma3} and $\varepsilon$ is the positive constant of Lemma \ref{Lemma3}. Regarding this choice, we deduce easily that
%		\[ | \tilde{f}(t,(x_0,0),\hat u(t)) | \le K \quad \text{a.e. } t \in [S,S+\varepsilon)  \quad \text{for some } K>0 ,\] and in particular for a.e. $t\in [S,S+\tilde{\varepsilon})$ by choosing $\tilde{\varepsilon} \in (0, \varepsilon]$.
% Condition (\ref{5}) is therefore satisfied.
 \vskip2ex
 \noindent
 {\bf 3.} To prove condition (\ref{7}), we consider the same choice of the control function $\hat{u}(.)$, that is $\hat u(t):=v(t)$ for $t \in [S,S+\varepsilon]$, where $\varepsilon$ is the positive number and $v(t)$ is the measurable function which satisfy the properties of Lemma \ref{Lemma3}. Condition (iii) of Lemma \ref{Lemma3} implies that there exist positive constants $ \rho, \beta $ such that 
		\begin{equation}\label{equation: choice of control proof main theorem}
		\eta \cdot(\hat{u}(t)-\dot{\bar x}(t)) < -\beta   \quad \text{for a.e.  } t \in [S,S+\varepsilon)
		\end{equation}
		for all $\eta \in {\rm co }(N_A(x) \cap \partial \mathbb{B})$ and for all $x \in (x_0 + \rho \mathbb{B}) \cap \partial A.$ In particular for all $\eta \in \partial^>_x d_A(x)$ (owing to Proposition \ref{Prop2}(i)) and by choosing $\tilde \varepsilon \in (0, \varepsilon] $, \, $ \tilde \rho \in (0, \rho] $ and $\tilde \beta = \beta.$ Take now any $\tilde \eta \in \partial_w^> d_A(x)$ where $x \in (x_0 + \tilde \rho \B) \cap \partial A$. By definition of $\partial^{>}_{w} d_{A}(x)$
		\[
		\begin{array}{l}
		\tilde{\eta} \in  \Big(\mathrm{co\,}\{a: \text{there exists } \; x_i \xrightarrow{d_A} x \text{ such that }
		\,  d_A(x_i)>0 \, \,
		\text{for all } i,\, \text{ and } \nabla_x d_A(x_i)\rightarrow a\},0\Big).
		\end{array}
		\] This is because $\nabla_z d_A(x_i)=0$. We conclude that $\tilde{\eta}$ can be written as:
		\begin{equation*}
		\tilde{\eta} :=(\eta,0) \quad \text{ where }\eta\in \partial^> d_A(x) \ .
		\end{equation*}
		It follows that, owing to (\ref{equation: choice of control proof main theorem}):
		\[
		(\eta,0) \cdot [\tilde{f}{(t,(x_0,0), \hat{u}(t))}-\tilde{f}{(t,(x_0,0),\bar{u}(t))}] = \eta \cdot (\hat{u}(t)-\bar{u}(t)) < - \tilde \beta,
		\] for all $\eta \in \partial ^{>} d _{A}(x)$ a.e. $t \in [S,S+\tilde \varepsilon)$ for all $x \in (x_0+\tilde \rho \B) \cap \partial A$. Condition (\ref{7}) is therefore confirmed.

		\vskip2ex
		\noindent
		{\bf 4.}
		For the proof of (\ref{9}), we take any $\tau \in \lbrace \sigma \in [S,T]\; : \; \bar{x}(\sigma) \in \partial A \rbrace$. Consider again the positive constants $\varepsilon$ and $\beta$ and the selection $v(.)$
		provided by Lemma \ref{Lemma3}. Property (\ref{item: control existence ipc1 chapter CV}) of the latter lemma implies that for a.e. $s, \ t \in (\tau-\varepsilon, \tau]$
		\begin{equation}  \label{choice of control 2 proof main theorem chap 5} \eta \cdot (v(t) - \dot{\bar x} (t)) < -\beta \end{equation} for all $\eta \in {\rm co}(N_{A}(\bar x (s)) \cap \partial \B).$ In particular for all $\eta \in \partial^{>}_{x} d_{A}(\bar x (s))$ (owing to Proposition \ref{Prop2}(i)) and for $\tilde \varepsilon \in (0, \varepsilon] $ and $ \tilde \beta = \beta$ and by considering the control function $\hat u (t) := v(t)$ for $t\in (\tau - \varepsilon, \tau]$ (and eventually for $t\in (\tau-\tilde{\varepsilon}, \tau]$). Now we take an element $\tilde \eta$ in $\partial^>_{w} d_A(\xb(s))$ for $ s \in (\tau -\tilde \varepsilon, \tau] \cap [S,T]$.
		\noindent
		It follows that,
		\[
		\begin{array}{l}
		\tilde{\eta} \in  \Big(\mathrm{co\,}\{a: \text{there exists } \, t_i \rightarrow s\text{ s.t. }
		\,  d_A(\xb(s_i))>0 \, \,
		\text{for all } i,\, d_A(\xb(s_i)) \rightarrow d_A(\xb(s))\text{ and } \\ \hspace{3 in}\nabla_x d_A(\xb(s_i))\rightarrow a \},0 \Big)
		\end{array}
		\]
		which is equivalent to write $\tilde{\eta}$ as
		\begin{equation*}
			\tilde{\eta} :=(\eta,0) \quad  \text{where }\eta\in \partial^> d_A(\xb(s))
		\end{equation*}
		and $ s \in (\tau - \tilde \varepsilon, \tau] \cap [S,T]$. Making use of (\ref{choice of control 2 proof main theorem chap 5}), it follows that
		\[
		(\eta,0) \cdot [\tilde{f}{(t,(\bar{x}(t),\bar{z}(t)), \hat{u}(t))}-\tilde{f}{(t,(\bar{x}(t),\bar{z}(t)),\bar{u}(t))}]= \eta \cdot (\hat{u}(t)-\bar{u}(t)) < - \tilde \beta,
		\]
		for all $\eta \in \partial^>d_A(\bar{x}(s))$ for a.e. $s, t \in (\tau - \tilde \varepsilon,\tau] \cap [S,T]$ and for all $\tau \in \lbrace \sigma \in [S,T] \; : \; \bar{x}(\sigma) \in \partial A \rbrace.$ We conclude that condition (\ref{9}) is satisfied.
		
		\vskip2ex
		\noindent
		{\bf 5.} Finally, it is clear that  (\ref{CQ for calculus of variations}) implies that (\ref{item: CQ3 normality ocp chap 5}) is satisfied.
		\vskip4ex
		\noindent
		\textbf{Step 2.} It is an easy task to prove that hypotheses (\ref{item: H1 ocp chap5})-(\ref{item: H3 ocp chap5}) adapted to the optimal control problem (\ref{ocp by state augmentation chap 5}) are satisfied. This is a direct consequence of hypothesis (\ref{item: CV1}).
%		\begin{itemize}
%			\item Owing to (\ref{item: CV1}), it is straightforward to see that the map $(t,u) \mapsto \tilde{f}{(t,(x,z),u)} =  \begin{pmatrix}
%			u  \\
%			L(t,x,u) \end{pmatrix} $ is a measurable function, for each $x\in \R^n$. Moreover, since $L(t,.,u)$ is Lipschitz for all  $(t,u)\in [S,T] \times \R^n$ (cf. (\ref{item: CV1})), we deduce that
%			\[
%			| \tilde{f}(t,(x,z),u)- \tilde{f}(t,(x',z'),u) | = \left| \begin{pmatrix}
%			0  \\
%			L(t,x,u)-L(t,x',u)
%			\end{pmatrix} \right| \leq K_L | x-x' |, \] for all $x, x' \in  \bar{x}(t) + \epsilon' \mathbb{B},$  $t\in [S,T]$ and $u \in \mathbb{R}^n.$ The Lipschitz continuity w.r.t. $z$ is obvious since $(x,z) \mapsto \tilde{f}{((x,z),u)}$ is independent on $z$.
%			Hypothesis (\ref{item: H1 ocp chap5}) is therefore satisfied by choosing $\delta= \epsilon'$. 
%			\item $U(t)= \mathbb{R}^n, \; U(.): [S,T] \leadsto \mathbb{R}^n,$ a multifunction. By consequence, ${\rm Gr}\ U(.)= [S,T] \times \mathbb{R}^n$ which is a measurable set. Hypothesis (\ref{item: H2 ocp chap5}) is satisfied.
%			\item Hypothesis (\ref{item: H3 ocp chap5}) is verified since the cost function of problem (\ref{ocp by state augmentation chap 5}), $g(x,z):=z$, is Lipschitz.
%		\end{itemize}
%		
		\vskip4ex
		\noindent
		\textbf{Step 3.} Apply Theorem \ref{Thm3} to (\ref{ocp by state augmentation chap 5}) and then obtain the assertions of Theorem \ref{Thm1}. 
		\vskip1ex
		\noindent
		Since $\bar x$ is a $W^{1,1}-$local minimizer for the reference calculus of variations problem (\ref{problem: calculus of variations}), then $((\bar x, \bar z),\dot{\bar x}=\bar u)$ is a $W^{1,1}-$local minimizer for the optimal control problem (\ref{ocp by state augmentation chap 5}). Theorem \ref{Thm3} can be therefore applied to the problem (\ref{ocp by state augmentation chap 5}). Namely, there exist a couple of absolutely continuous functions $(p_1,p_2) \in W^{1,1}([S,T],\mathbb{R}^n) \times W^{1,1}([S,T],\mathbb{R})$, a Borel measure $\mu(.)$ and a $\mu-$integrable function $\gamma(.)$, such that:
		\begin{enumerate}[label = (\roman{*})$'$, ref= (\roman{*})$'$]
			\item \label{item: adjoint arc proof main thm chap 5} $-(\dot{p_1}(t), \dot{p_2}(t)) \in \text{ co } \partial_{(x,z)} ((q_1(t),q_2(t)) \cdot \tilde{f}(t,(\bar{x}(t),\bar{z}(t)),\bar{u}(t))) \text{ a.e. } t \in [S,T],$
			\item \label{item: transversality condition proof main thm chap 5}$-(q_1(T), q_2(T)) \in \partial_{(x,z)}  \bar{z}(T),$
			\item \label{item: weirsterass condition proof main thm chap 5}$(q_1(t), q_2(t)) \cdot \tilde{f}(t,(\bar{x}(t),\bar{z}(t)), \bar u(t))= \max_{u \in U	(t)} (q_1(t), q_2(t)) \cdot \tilde{f}(t,(\bar{x}(t),\bar{z}(t)), u)  ,$
			\item \label{item: element in the hybrid subdifferential proof main thm chap 5}$\gamma(t) \in \partial^> d_A(\bar{x}(t))$ \quad and \quad $\textrm{supp}(\mu) \subset \lbrace t\in [S,T]\; : \; \bar{x}(t) \in \partial A \rbrace,$
		\end{enumerate}
		where
		\begin{equation*}
			q_1(t)= \begin{cases}
				p_1(S) & \quad  t =S  \\
				p_1(t)+ \int_{[S,t]} \gamma(s) d\mu(s), & \quad t\in (S,T]  \end{cases}
		\end{equation*}
		and
		\[
		q_2(t)= p_2(t), \quad \text{for } t \in [S,T].
		\]	
		The transversality condition \ref{item: transversality condition proof main thm chap 5} ensures that $q_1(T)=0$. This implies that condition (\ref{2_chap5}) of Theorem \ref{Thm1} is satisfied. Furthermore, $q_2(T)=-1 =p_2(T)$. 
		\noindent
		The adjoint system \ref{item: adjoint arc proof main thm chap 5} ensures, by expanding it and applying a well-known nonsmooth calculus rule, that:	
		\[ -(\dot{p_1}(t), \dot{p_2}(t)) \in \textrm{co } \partial_{(x,z)} \left(q_1(t) \cdot \bar{u}(t) + q_2(t) L(t,\bar{x}(t), \bar{u}(t)) \right).\]
		By the Lipschitz continuous of  $L(t,.,u)$ on a neighborhood of $\bar{x}(t)$, we obtain
		%$ -(\dot{p_1}(t), \dot{p_2}(t)) \in \textrm{co } \left(\partial_{(x,z)}(q_1(t) \cdot \bar{u}(t)) + \partial_{(x,z)}(q_2(t) L(\bar{x}(t),\bar{u}(t))) \right) $ 
		\[
		-(\dot{p}_1(t), \dot{p_2}(t)) \in q_2(t) \text{ co } \partial_{x} L(t,\bar{x}(t), \bar{u}(t)) \times \lbrace 0 \rbrace.
		\]
		Therefore, $\dot{p_2}(t)=0$ and $ - \dot{p_1}(t) \in q_2(t) \textrm{co } \partial_{x} L(t,\bar{x}(t), \bar{u}(t)).$
		We deduce that $ p_2(t) = q_2(t)=-1  $ and $\dot{p_1}(t) \in {\rm co }\ \partial_{x} L(t,\bar{x}(t), \bar{u}(t))$ a.e. $t$.
		\ \\
		\noindent
		The maximization condition \ref{item: weirsterass condition proof main thm chap 5} is equivalent to:
		\[
		q_1(t) \cdot \bar{u}(t) - L(t,\bar{x}(t), \bar{u}(t)) =  \mathop {\max }\limits_{u \in \mathbb{R}^n} \lbrace q_1(t) \cdot u - L(t,\bar{x}(t), u) \rbrace.
		\]
		Deriving the expression above in terms of $u$, and evaluating it at $u=\dot{\bar x}$, we obtain
		\[
		0 \in \textrm{co } \partial_{\dot{x}} \lbrace q_1(t) \cdot \bar{u}(t) - L(t,\bar{x}(t), \bar{u}(t)) \rbrace.
		\]
		Making use of the lower semicontinuity property of $v \mapsto L(t,\bar x(t),v)$ (cf. (\ref{item: CV2})) and using the sum rule \cite[page 45]{vinter_optimal_2010}, we have
		\[
		0 \in \text{ co }\lbrace \partial_{\dot{x}}(q_1(t) \cdot \dot{\bar{x}}(t)) +  \partial_{\dot{x}} (-L(t,\bar{x}(t), \bar{u}(t))) \rbrace \quad {\rm a.e.}
		\]
		Moreover, since $ \text{co }\partial(-f) = - \text{co }\partial(f)$, we deduce that
		\[ 0 \in q_1(t) - \textrm{co } \partial_{\dot{x}} (L(t,\bar{x}(t),\bar{u}(t)).\] Equivalently, \[ q_1(t) \in \textrm{co } \partial_{\dot{x}} L(t,\bar{x}(t), \bar{u}(t))   \quad \text{a.e. } t.  \] Condition (\ref{1_chap5}) of Theorem \ref{Thm1} is therefore satisfied. Condition (\ref{0_chap5}) is straightforward owing to \ref{item: element in the hybrid subdifferential proof main thm chap 5}. By consequence, the proof of Theorem \ref{Thm1} is complete. \qed

		\begin{remark}  \label{remark2}
			One may expect that the proof of Theorem \ref{Thm1} follows directly from results like \cite[Theorem 3.2]{fontes_normality_2015} for the particular case of free right-hand point constraint (eventually after expressing (\ref{problem: calculus of variations}) as an optimal control problem and by subsequently considering the augmented problem to cover the case of $W^{1,1}-$local minimizers). However, \cite[Theorem 3.2]{fontes_normality_2015} cannot be applied in our paper. Indeed, by inspecting \cite{fontes_normality_2015}, we notice that in order to apply the normality result \cite[Theorem 3.2]{fontes_normality_2015}, some regularity assumptions on the data should be satisfied, coupled with constraint qualifications (obviously weaker than the ones proposed in our paper (\ref{item: CQ1 normality ocp chap 5})-(\ref{item: CQ3 normality ocp chap 5}), and those discussed in the series of papers \cite{fontes_normal_2013}, \cite{lopes_constraint_2011}, \cite{ferreira_nondegenerate_1999}). In particular, \cite{fontes_normality_2015} assumes the following boundedness strong condition on the dynamics (referred as (H3) in \cite{fontes_normality_2015}): for a given $\delta>0$ and a local minimizer $\bar x(.)$,
			\[ \text{there exists } C_u \ge 0 \text{ such that } |f(t, x, u)| \le C_u \text{ for } x \in \bar x(t) + \delta \B , \ u \in U (t),
			\text{ and } t \in [S,T] \ .   \]
			It is clear that this regularity assumption is stronger than the one considered in our paper (cf. inequalities (\ref{boundedness boundary point}) and (\ref{boundedness initial point}) above), and it cannot be satisfied for our problem (\ref{ocp by state augmentation chap 5}). Indeed, the dynamic $\tilde{f}$ in problem (\ref{ocp by state augmentation chap 5}) is the following
			
			\[\tilde{f}{(t,(x(t),z(t)), u(t))} :=
			\begin{pmatrix}
			u(t) \\ L(t,x(t),u(t))
			\end{pmatrix} \qquad \text{where } u(t) \in \R^n \ . \]
			The dynamic $\tilde{f}$ satisfies assumption (H3) of \cite{fontes_normality_2015} only when $u(.)$ and $x(.)$ are bounded (owing to the boundedness of $L$ on bounded sets by (\ref{item: CV1})). By contrast, nothing guarantees, under the assumptions considered in our paper, that $\tilde{f}$ satisfies (H3) of \cite{fontes_normality_2015} for any $x$ near $\bar x(.)$ and for any $u \in U(t)$ which eventually has to coincide with the whole space $\R^n$, to derive the correct necessary conditions (in the normal form) for the class of problems considered in Theorem \ref{Thm1}.
		\end{remark}
		
		\section{Proof of Theorem \ref{Thm2} (Global Minimizers)}
		In this subsection we give details of a shorter proof based on a simple technique using the neighboring feasible trajectory result with linear estimate (initially introduced in \cite{rampazzo_theorem_1999}), while regarding the calculus of variations problems as an optimal control problem with final cost. The result we aim to prove is valid for global minimizers and under the stronger constraint qualification
		\begin{enumerate}[label=($\widetilde{CQ}$), ref= $\widetilde{CQ}$]
			\item \[\text{int }\ T_A(z) \neq \emptyset \ , \quad \text{for all } z \in \partial A.\]
		\end{enumerate}
%			We may assume at the outset of the proof that $L$ satisfies the Lipschitz continuity w.r.t. $x$ with $\epsilon'=+\infty$ (i.e. the Lipschitz continuity imposed on $L(t,.,u)$ is global). This is true if we consider the truncation function $\text{tr}_{y,\epsilon'} : \R^n \to \R^n$ defined to be
%			\[ \text{tr}_{y,\epsilon'}(x) :=  \begin{cases}
%			x \quad & \text{if } |x-y| < \epsilon' \\ y + \epsilon' \frac{x-y}{|x-y|} \quad & \text{if } |x-y| \ge \epsilon',
%			\end{cases}   \]
%			and we replace $L$ by its local expression
%			\[ \tilde{L}(x,v) : = L(t,\text{tr}_{\bar x(t), \epsilon'}(x), v).   \]
			We consider $\bar x(.)$ to be a global minimizer for the calculus of variations problem (\ref{problem: calculus of variations}).
%			\begin{equation}\begin{cases}\label{problem: calculus of variations second technique}
%					\begin{aligned}
%						& {\text{minimize}}
%						&&  \int_S^{T} L (x(t),\dot{x}(t)) \ dt  \\
%						&&& \hspace{-1.9cm} \text{over arcs } x \in  W^{1,1}([S,T], \mathbb{R}^{n}) \text{ satisfying} \\
%						&&& x(S) = x_0 \ , \\
%						%&&& \dot{x}(t) \in \dot{\bar{x}}(t) + M\mathbb{B} \quad & \text{for a.e. } t \in [S,T] \ , \\
%						&&& x(t) \in  A \quad  {\rm for \, all }\; t \in [S,T] \  . \end{aligned}\end{cases} \tag{$\widetilde{CV}$}\leqnomode
%			\end{equation}
%			\noindent
			We now employ the same standard argument (known as {\it state augmentation}) previously stated in the first proof technique. This allows to write the problem of calculus of variations (\ref{problem: calculus of variations}) as an optimal control problem. Indeed, by adding an extra absolutely continuous state variable \[ z(t)= \int_{S}^{t} L(s,x(s), \dot{x}(s))ds\]
			and by considering the dynamics $\dot{x} =u$, the problem (\ref{problem: calculus of variations}) can be written as the optimal control problem (\ref{problem: ocp second technique}):

			\begin{equation}\begin{cases}
					\begin{aligned} \label{problem: ocp second technique}
						& {\text{minimize}}
						&& z(T)  \\
						&&& \hspace{-1.9cm} \text{over } W^{1,1} \text{ arcs } (x(.),z(.)) \text{ satisfying}  \\
						&&& (\dot{x}(t), \dot z(t)) \in F(t,x(t)) \quad & {\rm a.e}\ t \in [S,T] \\
						&&& (x(S), z(S)) = (x_{0},0) \\
						&&&  (x(t), z(t)) \in A \times \mathbb{R} \quad & {\rm for \; all}\ t \in [S,T]  \  . \end{aligned}\end{cases}\tag{$\widetilde{P}$}\leqnomode
			\end{equation}
			where
			\begin{equation} \label{velocity set chap 5 second technique}        F(t,x) \ := \  \{ (u,L(t,x,u)) \ : \ u \in M \mathbb{B} \} \end{equation}
			for any $M>0$ large enough (for instance $M \ge 2 \| {\bar u(.)} \|_{L^\infty}$).
			
			\noindent
			It is easy to prove that if $\bar x(.)$ is a global minimizer for (\ref{problem: calculus of variations}), then $(\bar{x}(.), \bar{z}(.))$ is a global minimizer for (\ref{problem: ocp second technique}) where $\bar  z(t)= \int_{S}^{t} L(s,\bar x(s), {\dot{\bar x}}(s))ds$.
			
			\vskip2ex
			\noindent
			The proof of Theorem \ref{Thm2} is given in three steps. In Step 1, we show that the neighboring feasible trajectory theorem in \cite[Theorem 2.3]{bettiol_l$infty$_2012} holds true for our velocity set $F(.,.)$ and our constraint qualification $A\times\R$. In Step 2, we combine a penalization method with the $L^\infty-$linear estimate provided by \cite[Theorem 2.3]{bettiol_l$infty$_2012} which will permit to derive a minimizer for a problem involving a penalty term (in terms of the state constraint) in the cost. Step 3 is devoted to apply a standard Maximum Principle to an auxiliary problem and deduce the assertions of Theorem \ref{Thm2} in their normal form.
			\vskip2ex
			\noindent
			{\bf Step 1. } In this step, we prove that $(F(.,.),A\times \R)$ satisfies the hypothesis of \cite[Theorem 2.3]{bettiol_l$infty$_2012}. Indeed, the set of velocities is nonempty, of closed values owing to the Lipschitz continuity of $u \to L(t,x,u)$ (see \cite[Proposition 2.2.6]{clarke_optimization_1990}), and $F(.,x)$ is Lebesgue measurable for all $x\in \mathbb{R}^n$. Moreover, the set $F(t,x)$ is bounded on bounded sets, owing to the boundedness of $L(.,.,.)$ on bounded set, and to the boundedness of $\bar u(t)$ a.e. $t\in[S,T]$ (we recall that the minimizer $\bar x(.)$ is assumed to be Lipschitz). The Lipschitz continuity of $F(t,.)$ and the absolute continuity from the left of $F(.,x)$ is a direct consequence of assumption \ref{item: CV1'}. Finally, we observe that
			\[ F(t,x) \cap \text{int }T_{A\times \mathbb{R}}(x,z) \ = \ 
			 \\ \bigg( M \mathbb{B} \cap \text{int }T_A(x)   \bigg)  \times \mathbb{R} . \]
			The constraint qualification (\ref{CQ for calculus of variations_second_technique}) that we suggest
			\[\text{int }\ T_A(x) \neq \emptyset \quad \text{for all } x \in \partial A \ , \]	
			and the boundedness of $\|\bar u \|_{L^\infty} = \| \dot{\bar x} \|_{L^\infty}$ guarantee that for $M>0$ chosen large enough
				\[ F(t,x) \cap \text{int }T_{A\times \mathbb{R}}(x,z) \ \neq \ \emptyset . \]
			Therefore, \cite[Theorem 2.3]{bettiol_l$infty$_2012} is applicable.
			\vskip4ex
			\noindent
			{\bf Step 2.} In this step, we combine a penalization technique with the linear estimate given by \cite[Theorem 2.3]{bettiol_l$infty$_2012}. We obtain that $(\bar x, \bar z)$ is also a global minimizer for a new optimal control problem, in which an extra penalty term intervenes in the cost:
			\begin{lemma}\label{lemma: minimizer for a problem with penalty term in the cost, second technique chap 5} Assume that all hypotheses of Theorem \ref{Thm2}  are satisfied. Then, $(\bar x, \bar z)$ is a global minimizer for the problem:
				
				\begin{equation}\begin{cases}\label{problem: penalized ocp chap 5}\begin{aligned}
							& {\text{minimize}}
							&&  z(T) +  K \max\limits_{t\in[S,T]} d_{A \times \R}(x(t),z(t)) =: J(x(.),z(.))  \\
							&&& \hspace{-1.9cm} \text{over arcs }\ (x,z) \in  W^{1,1}([S,T], \mathbb{R}^{n}\times \R) \text{ satisfying} \\
							&&& (\dot x(t), \dot z(t)) \in F(t,x(t)) \quad  \text{a.e. }\ t\in[S,T] \\
							&&& (x(S),z(S)) = (x_0,0) , \quad x(S) \in A \ . \end{aligned}\end{cases} \tag{$\widetilde{P1}$}\leqnomode
				\end{equation} Here, $ K$ is the constant provided by \cite[Theorem 2.3]{bettiol_l$infty$_2012}.
			\end{lemma}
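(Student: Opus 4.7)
My plan is to reduce the lemma to a direct application of the neighboring feasible trajectory theorem already invoked in Step 1 of this section. The guiding observation is that the penalty constant $K$ appearing in $J$ is precisely the one furnished by \cite[Theorem 2.3]{bettiol_l$infty$_2012}, so any $F$-trajectory — even one violating the state constraint — can be ``repaired'' into a state-feasible trajectory at a cost controlled linearly by $\max_t d_{A\times\R}(x(t),z(t))$. Coupling this with global minimality of $(\bar x,\bar z)$ for the unpenalized state-constrained problem $(\widetilde P)$ will deliver the claim in one line.

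Concretely, I would proceed as follows. First, fix an arbitrary admissible pair $(x,z)$ for $(\widetilde{P1})$; this is simply an $F$-trajectory issuing from $(x_0,0)$, with no state-constraint requirement. Second, I apply \cite[Theorem 2.3]{bettiol_l$infty$_2012}, whose hypotheses hold thanks to Step 1, producing a trajectory $(\hat x,\hat z)$ of $F$ with $(\hat x(S),\hat z(S))=(x_0,0)$, $(\hat x(t),\hat z(t))\in A\times\R$ for every $t\in[S,T]$, and
\[
\|(\hat x,\hat z)-(x,z)\|_{L^\infty} \;\le\; K\,\max_{t\in[S,T]}\,d_{A\times\R}(x(t),z(t)).
\]
Third, since $(\hat x,\hat z)$ is admissible for $(\widetilde P)$ and $(\bar x,\bar z)$ is a global minimizer for $(\widetilde P)$, we obtain $\bar z(T)\le \hat z(T)$; combining with the $L^\infty$-estimate gives
\[
\bar z(T) \;\le\; \hat z(T) \;\le\; z(T)+K\,\max_{t\in[S,T]}d_{A\times\R}(x(t),z(t)) \;=\; J(x,z).
\]
Finally, because $\bar x(t)\in A$ for every $t$, the penalty term vanishes at $(\bar x,\bar z)$, so $J(\bar x,\bar z)=\bar z(T)\le J(x,z)$, which is global minimality of $(\bar x,\bar z)$ for $(\widetilde{P1})$.

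The only subtle point is a bookkeeping one: the constant $K$ appearing in the definition of $J$ must be chosen to equal the constant produced by \cite[Theorem 2.3]{bettiol_l$infty$_2012} applied to $(F,A\times\R)$. Step 1 guarantees that such a $K$ exists and depends only on the problem data, not on the particular trajectory $(x,z)$, so the definition of $(\widetilde{P1})$ is unambiguous. No further estimates or auxiliary constructions are needed; the lemma is essentially an immediate corollary of the linear $L^\infty$-estimate together with global optimality of $(\bar x,\bar z)$ for $(\widetilde P)$. The ``hardest'' ingredient — verifying the nonempty-intersection condition $F(t,x)\cap\mathrm{int}\,T_{A\times\R}(x,z)\neq\emptyset$ needed to invoke the NFT theorem — has already been carried out in Step 1 using (\ref{CQ for calculus of variations_second_technique}) and the Lipschitz continuity of $\bar x$.
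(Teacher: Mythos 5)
Your argument is correct and is essentially the paper's proof read contrapositively: the paper assumes an admissible pair for $(\widetilde{P1})$ with strictly smaller penalized cost and derives a contradiction via the same chain of inequalities (NFT repair, $L^\infty$-linear estimate with the constant $K$, global minimality of $(\bar x,\bar z)$ for $(\widetilde P)$, vanishing of the penalty at $(\bar x,\bar z)$). The direct formulation you give is, if anything, slightly cleaner since it avoids positing a global minimizer of $(\widetilde{P1})$ whose existence is not needed.
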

			
			\begin{proof}
				Suppose that there exists a global minimizer $(\hat{x}(.), \hat{z}(.))$ for (\ref{problem: penalized ocp chap 5}) such that
				\[  J(\hat{x}(.), \hat{z}(.)) < J(\bar{x}(.), \bar{z}(.))  . \] 
				Denote by $\hat \varepsilon := \max\limits_{t\in[S,T]} d_{A\times \R}(\hat{x}(t),\hat{z}(t)),$ the extent to which the reference trajectory $(\hat{x}(.),\hat{z}(.))$ violates the state constraint $A\times \R$. By the neighboring feasible trajectory result (with $L^\infty-$estimates) \cite[Theorem 2.3]{bettiol_l$infty$_2012}, there exists an $F-$trajectory $(x(.),z(.))$ and $ K>0$ such that
				\[
				\begin{cases}
				(x(S),z(S))=(x_0,0) \\
				(x(t),z(t)) \in A \times \R \quad \text{for all } t \in [S,T] \\ \| (x(.),z(.)) - (\hat x(.), \hat z(.)) \|_{L^\infty(S,T)} \le  K\hat \varepsilon.
				\end{cases}\]
				In particular
				\[ |z(T) - \hat{z}(T)| \le  K \hat{\varepsilon}.   \] Therefore,
				\begin{align*}
					z(T) \le \hat{z}(T) +  K \hat{\varepsilon} =  J(\hat x(.),\hat{z}(.)) < J(\bar x(.),\bar{z}(.)) = \bar z(T).
				\end{align*}
				But this contradicts the minimality of $(\bar x,\bar z)$ for (\ref{problem: ocp second technique}). The proof is therefore complete.
			\end{proof}
			A consequence of Lemma \ref{lemma: minimizer for a problem with penalty term in the cost, second technique chap 5} is the following:
			\begin{lemma}
				$\bar X(.) := (\bar{x}(.),\bar{z}(.)=\int_{S}^{t}L(s,\bar{x}(s),\dot{\bar x}(s))\ ds ,\bar w(.) \equiv 0)$ is a global minimizer for
				
				\begin{equation*}\begin{cases}\begin{aligned}
							& {\text{minimize}}
							&&  {g}(X(T)) \\
							&&& \hspace{-1.9cm} \text{over }\ X(.)=(x(.),z(.),w(.)) \in  W^{1,1}([S,T], \mathbb{R}^{n+2}) \text{ satisfying} \\
							&&& \dot{X}(t) = (\dot x(t), \dot z(t), \dot{w}(t)) \in G(t,X(t)) \quad  \text{a.e. } t\in[S,T] \\
							&&& {h}(X(t)) \le 0 \quad \text{for all } t\in [S,T] \\
							&&& (x(S),z(S),w(S)) \in  \{x_0\}   \times \{0\} \times \R^+ , \quad x(S) \in A
							\ . \end{aligned}\end{cases}
				\end{equation*}
				The cost function ${g}$ is defined by
				\[  {g}(X(T)):= z(T) + {K} w(T) . \]
				The multivalued function is defined by
				\[ G(t,X(t)) :=  \{ (u,L(t,x,u),0) \ : \ u \in M \B    \}   \] 
				and the function ${h}: \R^n \times \R \times \R \to \R$ (which provides the state constraint in terms of a functional inequality) is given by:
				\[{h}(X)= {h}(x,z,w) := d_{A}(x) - w \]
			\end{lemma}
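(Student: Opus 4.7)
The plan is to reduce the claim to the preceding Lemma~\ref{lemma: minimizer for a problem with penalty term in the cost, second technique chap 5} by exploiting the fact that the additional state variable $w(\cdot)$ acts precisely as a static upper bound for the state-constraint violation $\max_{t}d_A(x(t))$. The intuition is: because $G$ imposes $\dot w \equiv 0$ and $w(S)\in\R^+$, any admissible $w(\cdot)$ is a nonnegative constant $w_0$; then the functional constraint ${h}(X(t))\le 0$ amounts to $d_A(x(t))\le w_0$ for all $t\in[S,T]$, equivalently $\max_{t\in[S,T]} d_A(x(t))\le w_0$. Since $d_{A\times\R}(x,z)=d_A(x)$, the cost $z(T)+Kw(T) = z(T)+Kw_0$ is exactly the penalized cost $J(x(\cdot),z(\cdot))$ appearing in \eqref{problem: penalized ocp chap 5} whenever we pick $w_0$ optimally.

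The steps, in order, are as follows. First, I would verify that $\bar X=(\bar x,\bar z,\bar w\equiv 0)$ is admissible for the new problem: $(\dot{\bar x},\dot{\bar z})\in F(t,\bar x)$ by Step~1 (with $M\ge 2\|\dot{\bar x}\|_{L^\infty}$), so $(\dot{\bar x},\dot{\bar z},0)\in G(t,\bar X)$; the initial condition holds with $\bar w(S)=0\in\R^+$ and $\bar x(S)=x_0\in A$; and ${h}(\bar X(t))=d_A(\bar x(t))-0=0\le 0$ because $\bar x(t)\in A$ by feasibility of $\bar x$ for \eqref{problem: calculus of variations}. Consequently $g(\bar X(T))=\bar z(T)=J(\bar x,\bar z)$. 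Second, I would take an arbitrary admissible $X=(x,z,w)$ for the new problem, set $w_0:=w(S)=w(t)\ge 0$, observe that $(x,z)$ is admissible for \eqref{problem: penalized ocp chap 5} (the dynamics are identical, and $(x(S),z(S),x(S))=(x_0,0,x_0\in A)$), and that the state-constraint inequality yields
\[
\max_{t\in[S,T]} d_{A\times\R}(x(t),z(t))\;=\;\max_{t\in[S,T]} d_A(x(t))\;\le\; w_0.
\]
Therefore
\[
g(X(T))\;=\;z(T)+Kw_0\;\ge\; z(T)+K\max_{t\in[S,T]} d_{A\times\R}(x(t),z(t))\;=\;J(x,z).
\]
Finally, applying Lemma~\ref{lemma: minimizer for a problem with penalty term in the cost, second technique chap 5} gives $J(x,z)\ge J(\bar x,\bar z)=g(\bar X(T))$, and chaining these inequalities yields $g(X(T))\ge g(\bar X(T))$, which is global optimality of $\bar X$.

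I do not anticipate a serious obstacle: the only points requiring care are the identification $d_{A\times\R}(x,z)=d_A(x)$ (which follows immediately since $\R$ is the whole space), and the fact that the infimum over $w_0\ge 0$ of $z(T)+Kw_0$ subject to $w_0\ge\max_t d_A(x(t))$ is attained at $w_0=\max_t d_A(x(t))$, which is the content of the inequality displayed above. Everything else is a direct bookkeeping check that transfers the conclusion of Lemma~\ref{lemma: minimizer for a problem with penalty term in the cost, second technique chap 5} to the augmented problem.
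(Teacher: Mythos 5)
Your proof is correct and is essentially the paper's own argument: the paper phrases it as a contradiction while you chain the inequalities directly, but both rest on the same observations that $w(\cdot)$ is a nonnegative constant, that the state constraint ${h}(X(t))\le 0$ is equivalent to $\max_{t}d_A(x(t))\le w$, and that this bounds $J(x,z)$ by $g(X(T))$ so that Lemma~\ref{lemma: minimizer for a problem with penalty term in the cost, second technique chap 5} finishes the job. Your additional admissibility check for $\bar X$ is a harmless (and slightly more careful) supplement to what the paper leaves implicit.
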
 
			\begin{proof}
				By contradiction, suppose that there exists a state trajectory $X(.):=(x(.),z(.),w(.))$ satisfying the state and dynamic constraints of the problem, such that
				\[   {g}(X(T)) <  {g}(\bar X(T)). \] Observe that $w(.)\equiv w \ge 0$, and the state constraint condition is equivalent to
				\[ \max\limits_{t\in[S,T]} d_{A} (x(t)) \le w .	\]
				Then, we would obtain
				\begin{align*}
					J(x(.),z(.)) & = z(T) + {K} \max\limits_{t\in[S,T]}d_{A}(x(t)) \\ & \le  z(T) +  K w \\ & = g(X(T)) < g(\bar{X}(T)) = J(\bar{x}(.),\bar z(.)).  
				\end{align*}	
				This contradicts the fact that $(\bar x(.),\bar z(.))$ is a global minimizer for (\ref{problem: penalized ocp chap 5}).
			\end{proof}

			\vskip4ex
			\noindent
			\textbf{Step 3.} In this step we apply known necessary optimality conditions: there exist costate arcs $P(.)=(p_1(.),p_2(.),p_3(.)) \in W^{1,1}([S,T],\mathbb{R}^{n+2})$ associated with the minimizer $(\bar x(.), \bar{z}(.), \bar{w}\equiv 0)$, a Lagrange multiplier $\lambda \ge 0$, a Borel measure $\mu(.): [S,T] \to \R$ and a $\mu-$integrable function $\gamma(.)=(\gamma_1(.),\gamma_2(.),\gamma_3(.))$ such that:
			\begin{enumerate}[label=(\roman{*}), ref= \roman{*}]
				\item $(\lambda, P(.),\mu(.)) \ne (0,0,0)$ ,
				\item $-\dot{P}(t) \in \text{co }\partial_{(x,z,w)} \bigg( Q(t) \cdot (\dot{\bar x}(t), L(t,\bar{x}(t), \dot{\bar x}(t)), 0) \bigg)$ ,
				\item $(P(S),-Q(T)) \in  \lambda \partial g (\bar x(T), \bar z(T), \bar w \equiv 0) + (\R  \times \R \times \R^-) \times (\{0\}\times \{0\}\times \{0\})$ ,
				\item $Q(t) \cdot (\dot{\bar x}(t), L(t,\bar{x}(t), \dot{\bar x}(t)), 0) =  \max\limits_{u\in M\B}  Q(t) \cdot (u, L(t,\bar x(t),u), 0) $
				\item \label{condition_v} ${\gamma}(t) \in \partial^>_{(x,z,w)} {h}(\bar{x}(t),\bar{z}(t), \bar{w}\equiv0)$ \; $\mu-$a.e. \quad and \quad supp$(\mu) \subset \{ t \ : \ {h}(\bar{x}(t),\bar{z}(t), \bar{w}\equiv0) =0   \}$.
			\end{enumerate}
			Here, $Q(.): [S,T] \to \R^{n+2}$ is the function
			\[ Q(t) := P(t) + \int_{[S,t]}\gamma(s) \ d\mu(s) \quad \text{for } t\in (S,T] \ .   \]
			Note that by condition (\ref{condition_v}), $\gamma(t) = (\gamma_1(t),\gamma_2(t),\gamma_3(t)) \in \partial^>_x d_A(\bar x(t)) \times \{0\} \times \{-1\} $ \; $\mu-$a.e.\\
			\noindent
			From the conditions above, we derive the following:
			\begin{enumerate}[label=(\roman{*})$'$, ref= (\roman{*})$'$]
				\item \label{item: nco1 second technique}$(\lambda, p_1(.),p_2(.), p_3(.),\mu(.)) \ne (0,0,0,0,0)$ ,
				\item $-\dot{p}_1(t) \in p_2(t)\text{co }\partial_{x} L(t,\bar{x}(t), \dot{\bar x}(t))$ , a.e. $t\in[S,T]$ \quad and \quad $\dot{p_2}=\dot{p_3} \equiv 0$ ,
				\item \label{item: nco3 second technique} $-(p_1(T) + \int_{S}^{T}{\gamma}_1(s) \ d\mu(s)) = 0$, \quad $p_3(S)=p_3 \le 0$, \quad $-(p_3(T)-\int_{[S,T]} d\mu(s)) = \lambda K$ \quad and \quad $p_2(T)=p_2=-\lambda$ ,
				\item \label{item: nco4 second technique} $(p_1(t) + \int_{S}^{t}{\gamma}_1(s) \ d\mu(s)) \cdot \dot{\bar x}(t) - \lambda L(t,\bar{x}(t),\dot{\bar{x}}(t))  = \max\limits_{u\in M\B} \big\{  (p_1(t) + \int_{S}^{t}{\gamma}_1(s) \ d\mu(s)) \cdot u - \lambda L(t,\bar{x}(t),u) \big\} $
				\item \label{item: nco5 second technique}${\gamma}_1(t) \in \partial^>_{x}d_{A}(\bar{x}(t))$ \; $\mu-$a.e. \quad and \quad supp$(\mu) \subset \{ t \ : \ {h}(\bar{x}(t),\bar{z}(t), \bar{w}\equiv0) =0   \}$ 
			\end{enumerate}
			We prove that conditions \ref{item: nco1 second technique}-\ref{item: nco5 second technique} apply in the normal form (i.e. with $\lambda=1$). Indeed, suppose that $\lambda=0$, then
			\[  -p_3 + \int_{[S,T]} d\mu(s) =0 \quad p_1=0 \quad p_2=0 \quad p_3=0.   \]
			But this contradicts the nontriviality condition \ref{item: nco1 second technique}. Therefore, the relations \ref{item: nco1 second technique}-\ref{item: nco5 second technique} apply in the normal form.
			Moreover, notice that the convexity of $L(t,x,.)$ yields that the maximality condition \ref{item: nco4 second technique} is verified globally (i.e. for all $u\in \R^n$), and by deriving it w.r.t. $u=\dot x$, and making use of the max rule (\cite[Theorem 5.5.2]{vinter_optimal_2010}) we deduce that
			\[   p_1(t) + \int_{[S,t]}\gamma_1(s) \ d\mu(s) \in \text{co }\partial_{\dot x}L(t,\bar{x}(t), \dot{\bar{x}}(t)) \quad \text{a.e. } t\in [S,T].   \]
			This permits to conclude the necessary optimality conditions in the normal form of Theorem \ref{Thm2}. 		\qed

\vskip2ex{\bf Acknowledgments:} The authors are thankful to the reviewer of the first version of the paper who suggested to use the $L^\infty-$linear distance estimate approach. Moreover, helpful comments and suggestions by Piernicola Bettiol are gratefully acknowledged.


\begin{thebibliography}{99}
		
	\bibitem{aubin_set-valued_2009} J. -P. Aubin and H. Frankowska. \textit{ Set-valued analysis}. Springer Science and Business Media, 2009.
	
	
	\bibitem{bettiol_l$infty$_2012} P. Bettiol, H. Frankowska, and R. B. Vinter. ``L$^\infty$ estimates on trajectories confined to a closed subset". In: \textit{Journal of Differential Equations} 252.2 (2012), pp. 1912-1933.
	
	
	\bibitem{bettiol_nonautonomous_2017} P. Bettiol and P. Mariconda. ``Non autonomous integral functionals with discontinuous nonconvex integrands: Lipschitz regularity of minimizers". In. \textit{preprint} (2017).
	
	\bibitem{clarke_necessary_2005} F. H. Clarke. \textit{Necessary conditions in dynamic optimization.} 816. American Mathematical Soc., 2005.
	
	\bibitem{clarke_functional_2013}  F. H. Clarke.  \textit{Functionnal analysis, calculus of variations and optimal control.} Vol. 264. Springer Science and Business Media, 2013.
	
	
	\bibitem{clarke_regularity_1985} F. H. Clarke and R. B. Vinter. ``Regularity properties of solutions to the basic problem in the calculus of variations". In: \textit{Transactions of the American Mathematical Society} 289.1 (1985), pp. 73-98.
	
	\bibitem{clarke_nonsmooth_2008}  F. H. Clarke, Y. S. Ledyaev, R. J. Stern, and P. R. Wolenski. \textit{Nonsmooth analysis and control theory.} Vol 178. Springer Science and Business Media, 2008.
	
	\bibitem{clarke_optimization_1990}  F. H.  Clarke. \textit{ Optimization and nonsmooth analysis.} Vol. 5. Siam 1990.
		
		
	
	\bibitem{dal_maso_autonomous_2003} G. Dal. Maso, H. Frankowska. ``Autonomous integral functionals with discontinuous nonconvex integrands: Lipschitz regularity of minimizers, {DuBois}-Reymond necessary conditions, and Hamilton-Jacobi equations". In: \textit{Appl. Math. Optim} 48.1 (2003), pp. 39-66.
	
	
	\bibitem{ferreira_when_1994} M. M. A. Ferreira and R. Vinter. ``When is the maximum principle for state constrained problems nondegenerate?". In: \textit{Journal of Mathematical Analysis and Applications} 187.2 (1994), pp. 438-467.
			
	
	
	
	\bibitem{ferreira_nondegenerate_1999} M. M. A. Ferreira, F. A. C. C. Fontes and R. B. Vinter. ``Nondegenerate necessary conditions for nonconvex optimal control problems with state constraints". In: \textit{Journal of Mathematical Analysis and Applications} 233.1 (1999), pp. 116-129.
	
		
		
	\bibitem{fontes_normality_2015}  F. A. C. C. Fontes and H. Frankowska.  ``Normality and nondegeneracy for optimal control problems with state constraints". In: \textit{Journal of Optimization Theory and Applications}  166.1 (2015), pp. 115-136.

	
	\bibitem{fontes_normal_2013} F. A. C. C. Fontes and  S. O. Lopes. ``Normal forms of necessary conditions for dynamic optimization problems with pathwise inequality constraints". In: \textit{Journal of Mathematical Analysis and Applications} 399.1 (2013), pp. 27-37.
	
	
	\bibitem{frankowska2013inward} H. Frankowska and D. Tonon. ``Inward pointing trajectories, normality of the maximum principle and the non occurrence of the Lavrentieff phenomenon in optimal control under state constraints". In: \textit{J. Conv. Anal.} 20.4 (2013), pp. 1147-1180.


	
	
	\bibitem{frankowska2009normality} H. Frankowska. ``Normality of the maximum principle for absolutely continuous solutions to Bolza problems under state constraints". In \textit{Control and Cybernetics} 38 (2009), pp. 1327--1340.
		
	
	\bibitem{khalil:tel-01740334} N. Khalil. \textit{Optimality conditions for optimal control problems and applications, PhD Thesis.} Universit{\'e} de Bretagne occidentale - Brest. 2017.

	
	\bibitem{lopes_constraint_2011} S. O. Lopes, F. A. C. C. Fontes, M. D. R de Pinho. ``On constraint qualifications for nondegenerate necessary conditions of optimality applied to optimal control problems". In \textit{Discrete and Continuous Dynamical System-A} 29.2 (2011)
	
	
	\bibitem{mordukhovich_variational_2006} B. S. Mordukhovich. \textit{Variational analysis and generalized differentiation I: Basic theory.} Vol. 330. Springer Science and Business Media, 2006.
	
	
	\bibitem{rampazzo_theorem_1999}
	F. Rampazzo and R. B. Vinter. ``A theorem on existence of neighbouring trajectories satisfying a state constraint, with applications to optimal control". In: \textit{IMA Journal of Mathematical Control and Information} 16.4 (1999), pp. 335-251. 
	
	\bibitem{vinter_optimal_2010} R. B. Vinter. \textit{ Optimal control}. Springer Science and Business Media, 2010.
	
	
	
\end{thebibliography}
\end{document}